\documentclass{amsart}
\usepackage{amssymb}
\usepackage{graphicx}

\newtheorem{theorem}{Theorem}[section]
\newtheorem{proposition}[theorem]{Proposition}
\newtheorem{corollary}[theorem]{Corollary}
\newtheorem{algorithm}[theorem]{Algorithm}

\newtheorem{preremark}[theorem]{Remark}
\newtheorem{predefinition}[theorem]{Definition}
\newtheorem{preexample}[theorem]{Example}

\newenvironment{definition}{\begin{predefinition}\rm}{\end{predefinition}}
\newenvironment{example}{\begin{preexample}\rm}{\end{preexample}}

\mathsurround 1pt \hfuzz=3.1pt


\def\LT{\mathop{\rm LT}\nolimits}
\def\LC{\mathop{\rm LC}\nolimits}

\def\NR{\mathop{\rm NR}\nolimits}

\def\Mat{\mathop{\rm Mat}\nolimits}

\def\apker{\mathop{\rm apker}\nolimits}
\def\eval{\mathop{\rm eval}\nolimits}
\def\Supp{\mathop{\rm Supp}\nolimits}
\def\IX{I_{\mathbb{X}}}
\def\indOF{{\rm ind}_{\mathcal{O}_F}}

\let\epsilon=\varepsilon
\let\rho=\varrho
\let\phi=\varphi

\def\apcocoa{\mbox{\rm
   A\kern-.1em p\kern-.07em C\kern-.13em o\kern-.07 em
   C\kern-.13em o\kern-.15em A}}


\begin{document}

\title{Subideal Border Bases}

\author{Martin Kreuzer}
\address{Fakult\"at f\"ur Informatik und Mathematik,
Universit\"at Passau, D-94030 Passau,
Germany} \email{martin.kreuzer@uni-passau.de}

\author{Henk Poulisse}
\address{Shell Int.\ Exploration and Production, Exploratory Research,
Kessler Park 1, NL-2288 GD Rijswijk, The Netherlands}
\email{hennie.poulisse@shell.com}

\date{\today}
\keywords{approximate vanishing ideal,
Buchberger-Moeller algorithm, border basis}

\begin{abstract}
In modeling physical systems, it is sometimes useful to
construct border bases of 0-dimensional polynomial ideals
which are contained in the ideal generated by a given set of
polynomials. We define and construct such
subideal border bases, provide some basic properties
and generalize a suitable variant of the Buchberger-M\"oller algorithm
as well as the AVI-algorithm of~\cite{HKPP} to the subideal setting.
The subideal version of the AVI-algorithm is then applied
to an actual industrial problem.
\end{abstract}

\subjclass[2000]{Primary 13P10; Secondary 41A10, 65D05, 14Q99}

\maketitle


\section*{Contents}

\begin{enumerate}
{
\item[1.] Introduction
\item[2.] Subideal Border Bases
\item[3.] The Subideal Border Division Algorithm
\item[4.] The Subideal Version of the BM-Algorithm
\item[5.] The Subideal Version of the AVI-Algorithm
\item[6.] An Industrial Application
\item[] References
}
\end{enumerate}


\section{Introduction}

In~\cite{HKPP} an algorithm was introduced which
computes an approximate border basis consisting of unitary
polynomials that vanish approximately at a given set
of points. It has been shown that this {\it AVI-algorithm}\/
is useful for modeling physical systems based on a set of
measured data points. More precisely, given a finite point set
$\mathbb{X}=\{p_1,\dots,p_s\} \subset [-1,1]^n$, the
AVI-algorithm computes an order ideal~$\mathcal{O}$ of terms
in~$\mathbb{T}^n$ and an $\mathcal{O}$-border prebasis
$G=\{g_1,\dots,g_\nu\}$ such that

\begin{enumerate}
\item the unitary polynomials $g_i/\| g_i\|$ vanish
$\epsilon$-approximately at~$\mathbb{X}$, where $\epsilon>0$
is a given threshold number, and

\item the normal remainders of the S-polynomials $S(g_i,g_j)$
for $g_i,g_j$ with neighboring border terms are
smaller than~$\epsilon$.
\end{enumerate}

Abstractly speaking, the last condition means that
the point in the moduli space corresponding to~$G$
is ``close'' to the border basis scheme (see~\cite{KR3} and~\cite{KPR}).
In practical applications, the AVI-algorithm turns out to be very
stable and useful. With a judicial choice of the threshold number~$\epsilon$,
it is able to discover simple polynomial relations which exist in the data
with high reliability.
For instance, it discovers simple physical laws inherent in measured data
without the need of imposing model equations.

However, in some situations physical information may be available
which is not contained in the data points~$\mathbb{X}$ or we may have
exact physical knowledge which is only approximately represented by
the data points.
An example for this phenomenon will be discussed in Section~6.
For instance, we may want to
{\it impose}\/ certain vanishing conditions on the model equations
we are constructing. Using Hilbert's Nullstellensatz this translates
to saying that what we are looking for is the intersection
of the vanishing ideal of~$\mathbb{X}$ with a given ideal
$J\subseteq \mathbb{R}[x_1,\dots,x_n]$ whose generators
represent the vanishing conditions we want to impose.

In order to be able to deal with this approximate situation,
it is first necessary to generalize the exact version of the
computation of vanishing ideals to the subideal setting.
Then this theory will serve as a guide and a motivation for the
approximate case. Therefore this paper begins in Section~2 with the
definition and basic properties of subideal border bases.

Given a 0-dimensional ideal $I$ in a polynomial ring
$P=K[x_1,\dots,x_n]$ over a field and a set of polynomials
$F=\{f_1,\dots,f_m\}$ generating an ideal $J=\langle F\rangle$,
a subideal border basis of~$I$ corresponds to a set of
polynomials $\mathcal{O}_F=\mathcal{O}_1\cdot f_1 \cup \cdots
\cup \mathcal{O}_m\cdot f_m$, where the $\mathcal{O}_i$ are
order ideals of terms, such that the residue classes of
the elements of~$\mathcal{O}_F$ form a $K$-basis of
$J/(I\cap J)\cong (I+J)/I$. Clearly, this generalizes the
case $F=\{1\}$, i.e.\ the ``usual'' border basis theory.
We show that subideal border bases always exist and
explain a method to construct them from a border basis of~$I$.
Moreover, we discuss some uniqueness properties of subideal
border bases.

The foundation of any further development of the theory of
subideal border bases is a generalization of the
Border Division Algorithm (see~\cite{KR2}, 6.4.11)
to the subideal case. This foundation is laid in Section~3 where
we also study higher $\mathcal{O}_F$-borders, the
$\mathcal{O}_F$-index, and show that a subideal
border basis of~$I$ generates $I\cap J$.

In Section~4, we generalize the Buchberger-M\"oller algorithm
(BM-algorithm) for computing vanishing ideals of point sets to the
subideal setting. More precisely, we generalize a version of
the BM-algorithm which proceeds blockwise degree-by-degree and
produces a border basis of the vanishing ideal.
Similarly, the subideal version of the BM-algorithm (cf.\
Algorithm~\ref{SBMalg}) computes an
$\mathcal{O}_\sigma(\IX)_F$-subideal border basis of~$\IX$, where
$\mathcal{O}_\sigma(\IX)$ is the complement of a leading term
ideal of the vanishing ideal~$\IX$ of~$\mathbb{X}$.

Next, in Section~5, we turn to the setting of Approximate Computational Algebra.
We work in the polynomial ring $\mathbb{R}[x_1,\dots,x_n]$ over the reals
and assume that $\mathbb{X}\subset [-1,1]^n$ is a finite set of (measured, imprecise)
points. We define approximate $\mathcal{O}_F$-subideal border bases
and generalize the AVI-algorithm from~\cite{HKPP}, Thm.~3.3 to the subideal
case.

Let us point out that the subideal version of the AVI algorithm
contains a substantial difference to the traditional way of
computing approximate vanishing ideals, e.g.\ as in~\cite{AFT}.
Namely, the AVI algorithm produces a set of polynomials which
vanish approximately at the given data points, but we do not
demand that there exists a ``nearby'' set of points at which
these polynomials vanish exactly. The latter requirement
has turned out to be too restrictive for real-world applications,
for instance the one we explain in the last section. There we provide an example for
the application of these techniques to the problem of production allocation
in the oil industry.

Unless explicitly stated otherwise, we use the notation and
definitions of~\cite{KR1} and~\cite{KR2}. We shall assume that
the reader has some familiarity with the theory of exact and
approximate border bases (see for instance \cite{HKPP},
\cite{KK}, \cite{KKR}, \cite{KPR},
Section~6.4 of~\cite{KR2}, and~\cite{St}).

\bigskip

\section{Subideal Border Bases}

Here we are interested in a ``relative'' version
of the notion of border bases in the following sense.
Let $K$ be a field, let $P=K[x_1,\dots,x_n]$ be a
polynomial ring, let $\mathbb{T}^n$ be its monoid of terms,
let $\mathcal{O}$ be an order ideal in~$\mathbb{T}^n$, and let
$I\subset P$ be a 0-dimensional ideal.

Suppose we are given a further polynomial ideal
$J=\langle f_1,\dots,f_m\rangle$ of~$P$,
where $F=\{f_1,\dots,f_m\}\subset P\setminus \{0\}$.
Our goal is to describe and compute the intersection
ideal $I\cap J$ as a subideal of~$J$.
By Noether's isomorphism theorem, we have
$J/(I\cap J) \cong (I+J)/I \subset P/I$.
Therefore $J$ has a finite $K$-vector space basis
modulo $I\cap J$. Now we are looking for the
following special kind of vector space basis.

\begin{definition} Let $\mathcal{O}$ be an order
ideal of terms in~$\mathbb{T}^n$ whose residue classes form
a $K$-vector space basis of~$P/I$.

\begin{enumerate}
\item For $i=1,\dots,m$, let $\mathcal{O}_i
\subseteq \mathcal{O}$ be an order ideal.
Then the set $\mathcal{O}_F=\mathcal{O}_1\cdot f_1
\cup \cdots \cup \mathcal{O}_m\cdot f_m$
is called an {\it $F$-order ideal}.
Its elements, i.e.\ products of the form $tf_i$
with $t\in\mathcal{O}_i$ will be called {\it $F$-terms}.

\item If $\mathcal{O}_F=\mathcal{O}_1\cdot f_1
\cup \cdots \cup \mathcal{O}_m\cdot f_m$
is an $F$-order ideal whose residue classes
form a $K$-vector space basis of $J/(I\cap J)$,
we say that the ideal~$I$ has an {\it $\mathcal{O}_F$-subideal
border basis}.
\end{enumerate}
\end{definition}

Notice that an $F$-term may be viewed as a generalization
of the usual notion of term by using $F=\{1\}$.
Similarly, $F$-order ideals generalize the usual
order ideals.
It is natural to ask whether every ideal~$I$
supporting an $\mathcal{O}$-border bases has
an $\mathcal{O}_F$-subideal border basis
for some $F$-order ideal $\mathcal{O}_F$.
The next proposition answers this positively.

\begin{proposition}
Let $I\subset P$ be a 0-dimensional ideal,
and let $J=\langle f_1,\dots,f_m\rangle\subset P$
be any ideal.

\begin{enumerate}
\item Given an order ideal $\mathcal{O}\subset
\mathbb{T}^n$ whose residue classes generate the
$K$-vector space $P/I$, there exists an order ideal
$\widetilde{\mathcal{O}} \subseteq \mathcal{O}$
whose residue classes from a $K$-vector space
basis of~$P/I$.

\item Let $U\subset P^m$ be a $P$-submodule, and let
$\mathcal{O}_1,\dots,\mathcal{O}_m$ be order ideals
in~$\mathbb{T}^n$ such that the residue classes of
$\mathcal{O}_1 e_1 \cup \cdots \cup \mathcal{O}_m e_m$
generate the $K$-vector space $P^m/U$. Then there exist
order ideals $\widetilde{\mathcal{O}_i}\subseteq
\mathcal{O}_i$ such that the residue classes of
$\widetilde{\mathcal{O}}_1 e_1 \cup \cdots \cup
\widetilde{\mathcal{O}}_m e_m$
form a $K$-vector space basis of $P^m/U$.

\item If $\mathcal{O}$ is an order ideal whose residue classes
from a $K$-vector space basis of~$P/I$ then there exist order ideals
$\mathcal{O}_i\subseteq \mathcal{O}$ such that the residue
classes of $\mathcal{O}_F=\mathcal{O}_1 f_1 \cup \cdots\cup
\mathcal{O}_m f_m$ are a $K$-vector space basis of $J/(I\cap J)$.
In other words, the ideal~$I$ has an $\mathcal{O}_F$-subideal
border basis.

\end{enumerate}
\end{proposition}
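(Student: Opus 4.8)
The plan is to prove the three parts in order, using each to build toward the next.

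For part (1), the idea is a standard greedy/downward argument. Since the residue classes of the terms in $\mathcal{O}$ span $P/I$, which is finite-dimensional, we can extract a basis. The point is that we may do so while keeping the order-ideal property: enumerate the terms of $\mathcal{O}$ in a degree-compatible term ordering and, walking from the bottom up, discard a term $t$ precisely when its residue class lies in the span of the residue classes of the terms already kept. Because we are walking upward in a term ordering, any term discarded this way has all its predecessors still available, so the surviving set $\widetilde{\mathcal{O}}$ is closed under passing to divisors, i.e.\ is again an order ideal. That $\widetilde{\mathcal{O}}$ is a basis is immediate from the greedy construction, and it is contained in $\mathcal{O}$ by construction.

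For part (2), the plan is to repeat the same greedy extraction but now in the free module $P^m$. We fix a term ordering on the terms $t e_i$ (for instance, a degree-compatible ordering refined by the index $i$, or any module term ordering) and walk upward through $\mathcal{O}_1 e_1 \cup \cdots \cup \mathcal{O}_m e_m$, discarding $t e_i$ exactly when its class modulo $U$ lies in the span of the classes of the elements already retained. Setting $\widetilde{\mathcal{O}}_i = \{t : t e_i \text{ retained}\}$, the retained set is a basis of $P^m/U$ by the same greedy argument. The only thing to check carefully is that each $\widetilde{\mathcal{O}}_i$ is an order ideal: if $t e_i$ is retained and $t' \mid t$, we must argue $t' e_i$ was not discarded. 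This follows because $t' e_i$ precedes $t e_i$ in the chosen ordering, so it was processed first; and it cannot have been discarded, since if it were redundant then so would be every element of $\mathcal{O}_i e_i$ above it that depends on it --- more precisely one reruns the argument from part (1) componentwise, the key being that $\{t : t e_i \text{ retained}\}$ coincides with the order ideal one would get by applying part (1) to the $i$-th component modulo the appropriate submodule. I expect this bookkeeping to be the main (though still routine) obstacle.

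For part (3), the plan is to reduce to part (2) via the presentation of $J/(I\cap J)$ as a quotient of a free module. Consider the surjective $P$-linear map $\pi\colon P^m \to J/(I \cap J) \cong (I+J)/I$ sending $e_i \mapsto \overline{f_i}$, and let $U = \ker \pi$. Since $\mathcal{O}$ is a $K$-basis of $P/I$, the residue classes of $\mathcal{O} e_1 \cup \cdots \cup \mathcal{O} e_m$ generate $P^m/U$ as a $K$-vector space (each $\overline{t f_i}$ is a $K$-combination of $\mathcal{O}$-residues because $\mathcal{O}$ spans $P/I$, and these generate the image). Applying part (2) with $\mathcal{O}_i = \mathcal{O}$ for all $i$ yields order ideals $\mathcal{O}_i \subseteq \mathcal{O}$ such that $\mathcal{O}_1 e_1 \cup \cdots \cup \mathcal{O}_m e_m$ is a $K$-basis of $P^m/U$; pushing forward through the isomorphism $P^m/U \cong J/(I\cap J)$ shows that the residue classes of $\mathcal{O}_F = \mathcal{O}_1 f_1 \cup \cdots \cup \mathcal{O}_m f_m$ form a $K$-basis of $J/(I\cap J)$. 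That is exactly the statement that $I$ has an $\mathcal{O}_F$-subideal border basis. The one subtlety worth a sentence is that $\pi$ really is well-defined and that $P^m/U$ is finite-dimensional over $K$ --- both follow since $I$ is $0$-dimensional, so $P/I$ and hence $(I+J)/I$ are finite-dimensional.
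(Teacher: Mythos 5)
Your reduction of (3) to (2) via $U=\ker\bigl(P^m\to (I+J)/I\bigr)$ is exactly the paper's argument and is fine, and the overall greedy strategy for (1) and (2) is also the paper's. But the step you dismiss as routine bookkeeping is precisely where the argument fails: it is \emph{not} true that the set retained by pure greedy extraction (discard $t$ exactly when $\bar t$ lies in the span of the previously retained classes) is an order ideal, and the claim you lean on in part (2) --- ``if $t'$ were redundant then so would be every element above it'' --- is false. Concretely, take $P=K[x,y]$, $I=\langle x-y,\,y^3\rangle$ (zero-dimensional, $\dim_K P/I=3$), $\mathcal{O}=\{1,x,y,xy\}$, and $\sigma$ degree-lexicographic with $x>_\sigma y$. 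The residue classes of $\mathcal{O}$ are $1,\,y,\,y,\,y^2$ and span $P/I$. Your procedure retains $1$ and $y$, discards $x$ (since $\bar x=\bar y$), and then \emph{retains} $xy$, because $\overline{xy}=\overline{y^2}\notin\langle\bar 1,\bar y\rangle_K$. The retained set $\{1,y,xy\}$ is a basis but not an order ideal. The mechanism behind the failure: if $t'\equiv\sum c_s s$ with $s$ retained and $s<_\sigma t'$, then $ut'\equiv\sum c_s\,us$, but the terms $us$ need not lie in $\mathcal{O}$, so there is no induction that forces $ut'$ to be redundant. Worse, in this example \emph{no} sub-order-ideal of $\mathcal{O}$ is a basis of $P/I$ (the only three-element one is $\{1,x,y\}$, whose classes are dependent), so the defect cannot be cured by a cleverer ordering; any correct treatment has to use hypotheses beyond ``$\mathcal{O}$ is a spanning order ideal''.

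For contrast, the paper's proof takes the opposite horn of the dilemma: upon finding $t_i$ redundant it discards $t_i$ \emph{together with all of its multiples} in $\mathcal{O}$. That trivially forces the surviving set to be an order ideal, and the burden shifts to showing that the discarded multiples still lie in the span of what is kept --- which is the genuinely delicate point (and, as the example above shows, the point at which the hypotheses must actually be brought to bear, e.g.\ in the situation of parts (2) and (3) where the generating $F$-terms come from a full basis $\mathcal{O}$ of $P/I$). As written, your proposal asserts the order-ideal property of the greedy output without proof in (1) and with an incorrect justification in (2), and since (3) rests on (2), the gap propagates through the whole proposition.
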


\begin{proof}
First we show (1).  We construct the order ideal~$\widetilde{\mathcal{O}}$
inductively. To this end, we choose a degree compatible term
ordering~$\sigma$ and order~$\mathcal{O}=\{t_1,\dots,t_\mu\}$
such that $t_1 <_\sigma \cdots <_\sigma t_\mu$. In particular,
we have $t_1=1$. Since $I\subset P$, we can start by putting~$t_1$
into~$\widetilde{\mathcal{O}}$ and removing it from~$\mathcal{O}$.

For the induction step, we consider the $\sigma$-smallest term~$t_i$
which is still in~$\mathcal{O}$. If the residue class of~$t_i$ in~$P/I$
is $K$-linearly dependent on the residue classes of the elements
in~$\widetilde{\mathcal{O}}$, we cancel~$t_i$ and all of its multiples
in~$\mathcal{O}$. Each of these terms can be rewritten modulo~$I$
as a linear combination of smaller terms w.r.t.~$\sigma$.
If the residue class of~$t_i$ in~$P/I$ is $K$-linearly independent of
the residue classes of the elements in~$\widetilde{\mathcal{O}}$,
we append~$t_i$ to~$\widetilde{\mathcal{O}}$ and remove it
from~$\mathcal{O}$. In this way, the residue classes of the elements
of~$\widetilde{\mathcal{O}}$ are always $K$-linearly independent
in~$P/I$, and every element of~$\mathcal{O}$ can be rewritten modulo~$I$
as a $K$-linear combination of the elements of the final
set~$\widetilde{\mathcal{O}}$.

Now we show~(2). Let $\overline{U}$ be the idealization of~$U$
in $\overline{P}=K[x_1,\dots,x_n,e_1,\dots,e_m]$ (see~\cite{KR2},
Section 4.7.B). The residue classes of the elements of
the order ideal $\overline{\mathcal{O}}=
\mathcal{O}_1e_1 \cup \cdots \cup \mathcal{O}_m e_m$
generate the $K$-vector space $\overline{P}/\overline{U}$.
Now it suffices to apply~(1) and to note that every
order subideal of~$\overline{\mathcal{O}}$ has the indicated form.

Finally, we prove~(3). Consider the $P$-linear
map $\phi: P^m \longrightarrow P/I$ defined by $e_i \mapsto f_i+I$.
Its image is the ideal $(I+J)/I$. Let $U=\ker(\phi)$.
Then~$\phi$ induces an isomorphism $\bar{\phi}: P^m/U \cong (I+J)/I$.
To get a system of generators of~$P^m/U$, it suffices to
find a system of generators of the ideal
$(I+J)/I = \langle f_1+I,\dots,f_m+I \rangle$.
As a vector space, this ideal is generated by
$\mathcal{O}\cdot (f_1+I) \cup\cdots\cup \mathcal{O}\cdot
(f_m+I)$. The preimages of these generators are
the elements of $\mathcal{O}e_1 \cup \cdots \cup \mathcal{O} e_m$.
Now an application of~(2) finishes the proof.
\end{proof}

Based on this proposition, we can construct an $F$-order ideal
such that a given ideal~$I$ has an $\mathcal{O}_F$-subideal
border basis. The following example illustrates the method.

\begin{example}\label{OFexample}
Let $P=\mathbb{Q}[x,y]$, let $I=\langle x^2-x,\, y^2-y\rangle$,
let $\mathcal{O}=\{1,\,x,\,y,\,xy\}$, and let $J=\langle x+y\rangle$.
Then~$I$ has an $\mathcal{O}$-border basis and therefore also
an $\mathcal{O}_F$-subideal border basis w.r.t.\ $F=\{f\}$
for $f=x+y$.

To construct a suitable $F$-order ideal, we start with $\mathcal{O}_F=
\{1\cdot f\}$. Then we put $x\cdot f$ and $y\cdot f$ into~$\mathcal{O}_F$,
since we have $x\cdot f \equiv xy+x$ and $y\cdot f\equiv xy+y$ modulo~$I$,
and since $\{x+y,\,xy+x,\,xy+y\}$ is $\mathbb{Q}$-linearly independent
in~$P/I$. Next $xy\cdot f \equiv 2xy \equiv x\cdot f + y\cdot f -1\cdot f$
implies that we are done. The result is that
$\mathcal{O}_F=\{f,\,xf,\,yf\}$ is an $F$-order ideal for which~$I$
has an $\mathcal{O}_F$-subideal border basis.
\end{example}

At this point it is time to explain the choice of the
term ``subideal border basis'' in the above definition.

\begin{definition}
Let $F=\{f_1,\dots,f_m\} \subset P\setminus \{0\}$, and let
$\mathcal{O}_F=\mathcal{O}_1 f_1 \cup \cdots \cup
\mathcal{O}_m f_m$ be an $F$-order ideal.
We write $\mathcal{O}_F=\{t_1 f_{\alpha_1},\dots,
t_\mu f_{\alpha_\mu}\}$ with $\alpha_i\in\{1,\dots,m\}$
and $t_i \in \mathcal{O}_{\alpha_i}$.

\begin{enumerate}
\item The set of polynomials $\partial\mathcal{O}_F=
(x_1\mathcal{O}_F \cup \cdots\cup x_n \mathcal{O}_F)
\setminus \mathcal{O}_F$
is called the {\it border} of~$\mathcal{O}_F$.

\item Let $\partial \mathcal{O}_F= \{b_1 f_{\beta_1},\dots,b_\nu f_{\beta_\nu}\}$.
A set of polynomials $G=\{g_1,\dots,g_\nu\}$ is called
an {\it $\mathcal{O}_F$-subideal border prebasis} if
$g_j= b_j f_{\beta_j} - \sum_{i=1}^\mu c_{ij} t_i f_{\alpha_i}$
with $c_{1j},\dots,c_{\mu j}\in K$ for $j=1,\dots,\nu$.

\item An $\mathcal{O}_F$-subideal border prebasis~$G$ is called an
{\it $\mathcal{O}_F$-subideal border basis} of an ideal~$I$
if~$G$ is contained in~$I$ and the residue classes of the elements
of~$\mathcal{O}_F$ form a $K$-vector space basis of $J/(I\cap J)$.

\end{enumerate}
\end{definition}

In this terminology, the last part of the preceding
proposition can be rephrased as follows.

\begin{corollary}
Let $\mathcal{O}$ be an order ideal in~$\mathbb{T}^n$,
and let $I\subset P$ be a 0-dimensional ideal which
has an $\mathcal{O}$-border basis. Then~$I$ has an
$\mathcal{O}_F$-subideal border basis for every ideal
$J=\langle f_1,\dots,f_m \rangle$ and $F=\{f_1,\dots,f_m\}
\subset P\setminus \{0\}$.
\end{corollary}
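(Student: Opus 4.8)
The plan is to read this off from part~(3) of the preceding proposition, the only genuinely new ingredient being the explicit construction of the border prebasis. First I would recall the standard fact that an ideal possessing an $\mathcal{O}$-border basis has the property that the residue classes of the elements of~$\mathcal{O}$ form a $K$-vector space basis of~$P/I$ (see~\cite{KR2}, Section~6.4). Thus the hypotheses of part~(3) of the preceding proposition are satisfied, and we obtain order ideals $\mathcal{O}_1,\dots,\mathcal{O}_m\subseteq\mathcal{O}$ such that, writing $\mathcal{O}_F=\mathcal{O}_1 f_1\cup\cdots\cup\mathcal{O}_m f_m=\{t_1 f_{\alpha_1},\dots,t_\mu f_{\alpha_\mu}\}$, the residue classes of the $F$-terms $t_i f_{\alpha_i}$ form a $K$-vector space basis of $J/(I\cap J)$. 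Note that this space is finite-dimensional since $P/I$ is, so $\mathcal{O}_F$ and hence $\partial\mathcal{O}_F$ are finite.

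Next I would produce an $\mathcal{O}_F$-subideal border prebasis contained in~$I$. Write $\partial\mathcal{O}_F=\{b_1 f_{\beta_1},\dots,b_\nu f_{\beta_\nu}\}$. For each $j\in\{1,\dots,\nu\}$, the element $b_j f_{\beta_j}$ is a polynomial multiple of~$f_{\beta_j}$ and therefore lies in~$J$. Since the residue classes of the elements of~$\mathcal{O}_F$ form a $K$-basis of $J/(I\cap J)$, there are uniquely determined scalars $c_{1j},\dots,c_{\mu j}\in K$ with $b_j f_{\beta_j}+(I\cap J)=\sum_{i=1}^\mu c_{ij}\bigl(t_i f_{\alpha_i}+(I\cap J)\bigr)$. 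Setting $g_j=b_j f_{\beta_j}-\sum_{i=1}^\mu c_{ij}\,t_i f_{\alpha_i}$, the polynomial $g_j$ has exactly the shape demanded of an element of an $\mathcal{O}_F$-subideal border prebasis, it lies in~$J$ (being a $K$-combination of elements of~$J$), and its residue class in $J/(I\cap J)$ vanishes by construction; hence $g_j\in I\cap J\subseteq I$.

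Therefore $G=\{g_1,\dots,g_\nu\}$ is an $\mathcal{O}_F$-subideal border prebasis with $G\subseteq I$, and by the choice of the~$\mathcal{O}_i$ the residue classes of the elements of~$\mathcal{O}_F$ form a $K$-vector space basis of $J/(I\cap J)$. By the definition of a subideal border basis, $G$ is an $\mathcal{O}_F$-subideal border basis of~$I$, which proves the claim. The one point that requires care is that reducing each border $F$-term $b_j f_{\beta_j}$ modulo $I\cap J$ yields coefficients over~$K$ rather than over~$P$; this is precisely what the \emph{basis} (not merely generating) property furnished by part~(3) of the preceding proposition guarantees, so with that proposition in hand there is no further obstacle.
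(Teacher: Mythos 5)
Your argument is correct and follows the same route as the paper, which in fact offers no separate proof: it states that the corollary is simply a rephrasing of part~(3) of the preceding proposition in the terminology of subideal border (pre)bases. Your additional step of explicitly constructing the polynomials $g_j$ by reducing each border $F$-term modulo $I\cap J$ against the basis $\mathcal{O}_F$ is a correct and welcome elaboration of what the paper leaves implicit (and later illustrates in an example).
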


In the setting of Example~\ref{OFexample}, the
$\mathcal{O}_F$-subideal border basis of~$I$
can be constructed as follows.

\begin{example}
The border of the $F$-order ideal
$\mathcal{O}_F=\{f,xf,yf\}$ is
$\partial\mathcal{O}_F=\{x^2f,xyf,y^2f\}$.
We compute modulo~$I$ and find
$x^2f \equiv xf$, $xyf\equiv xf+yf-f$, and
$y^2f\equiv yf$. Therefore the set
$G=\{x^2f-xf,\, xyf-xf-yf+f,\, y^2f-yf\}$
is an $\mathcal{O}_F$-subideal border basis
of~$I$.
\end{example}

If an ideal has an $\mathcal{O}_F$-subideal border basis,
the elements of this basis are uniquely determined.
This follows exactly as in the
case $J=\langle 1\rangle$, i.e.\ the case of the usual border
bases (see~\cite{KR2}, 6.4.17 and 6.4.18). Notice, however,
that a set of polynomials may be an $F$-order ideal
in several different ways. This is illustrated by the
following example.

\begin{example}
Let $P=\mathbb{Q}[x,y]$, let
$I=\langle x^2-x,\,y^2-y\rangle$, and let $J=\langle x,\,y\rangle$.
Clearly, the ideal~$I$ has an $\mathcal{O}$-border basis
for $\mathcal{O}=\{1,\,x,\,y,\,xy\}$, namely the set
$G=\{x^2-x,\, x^2y-xy,\, xy^2-xy,\, y^2-y\}$.
Hence the ideal~$I$ also has an $\mathcal{O}_F$-subideal
border basis for $F=\{x,\,y\}$. Here we can use both
$\mathcal{O}_F=\{1,y\}\cdot x \cup \{1\}\cdot y$
and $\mathcal{O}_F=\{1\}\cdot x \cup \{1,x\}\cdot y$.

This example shows also another phenomenon: an $F$-term
can simultaneously be contained in~$\mathcal{O}_F$ and in
$\partial\mathcal{O}_F$. For instance, if we use
$\mathcal{O}_F=\{1,y\}\cdot x \cup \{1\}\cdot y$, the term
$xy$ is both contained in $\{1,y\}\cdot x$ and in the
border of $\{1\}\cdot y$. The resulting subideal border
basis will contain the polynomial $xy-xy=0$.
\end{example}

Finally, we give an example where a term is in~$\partial\mathcal{O}_F$
in two different ways, so that a subideal border basis polynomial
is repeated.

\begin{example}
Let $I=\langle x^2-x,\,y^2-y,\,xy\rangle \subseteq \mathbb{Q}[x,y]$,
and let $J=\langle x,\, y\rangle \subset \mathbb{Q}[x,y]$.
Then the subideal border basis of~$I$ with respect to
$\mathcal{O}_F=\{1\}\cdot x \cup \{1\}\cdot y$ is
$G=\{x^2-x,\,xy,xy,y^2-y\}$ where $xy$ appears both in
$\partial\{1\}\cdot x$ and in $\partial\{1\}\cdot y$.
\end{example}

\bigskip

\section{The Subideal Border Division Algorithm}

A central result in the construction of any Gr\"obner-basis-like
theory is a suitable version of the division algorithm (for the classical
case, see for instance~\cite{KR1}, Thm.~1.6.4 and specifically for
border bases, see~\cite{KR2}, Prop.~6.4.11). Before we can present
a subideal border basis version, we need a few additional definitions.

\begin{definition}\label{defhigherborder}
Let $F=\{f_1,\dots,f_m\} \subset P\setminus \{0\}$, and let
$\mathcal{O}_F$ be an $F$-order ideal.

\begin{enumerate}
\item The {\it first border closure} of~$\mathcal{O}_F$ is
$\overline{\partial \mathcal{O}_F}= \mathcal{O}_F\cup
\partial\mathcal{O}_F$.

\item For every $k\ge 1$, we inductively define the
{\it $(k+1)^{\it st}$ border} of~$\mathcal{O}_F$
by $\partial^{k+1}\mathcal{O}_F =
\partial (\overline{\partial^k \mathcal{O}_F})$ and the
{\it $(k+1)^{\it st}$ border closure} of~$\mathcal{O}_F$ by
$\overline{\partial^{k+1}\mathcal{O}_F} =
\overline{\partial^k\mathcal{O}_F} \cup \partial^{k+1}\mathcal{O}_F$.

\item Finally, we let $\partial^0 \mathcal{O}_F=
\overline{\partial^0\mathcal{O}_F} = \mathcal{O}_F$.
\end{enumerate}
\end{definition}

Using these higher borders,
the set $\mathbb{T}^n\, f_1 \cup \cdots\cup \mathbb{T}^n\, f_m$
is partitioned as follows.

\begin{proposition}\label{BorderProps}
Let $F=\{f_1,\dots,f_m\} \subset P\setminus \{0\}$, and let
$\mathcal{O}_F$ be an $F$-order ideal.

\begin{enumerate}
\item For every $k\ge 0$, we have a disjoint union
$\overline{\partial^k\mathcal{O}_F}=\bigcup_{i=0}^k\partial^i\mathcal{O}_F$.

\item For every $k\ge 0$, we have $\overline{\partial^k\mathcal{O}_F}
=\mathbb{T}^n_{\le k}\cdot \mathcal{O}_F$.

\item For every $k\ge 1$, we have
$\partial^k\mathcal{O}_F=\mathbb{T}_k^n\cdot \mathcal{O}_F
\setminus \mathbb{T}_{<k}^n\cdot \mathcal{O}_F$.

\item We have $\bigcup_{i=0}^m\mathbb{T}^n\cdot f_i
=\bigcup_{j=0}^\infty \partial^j\mathcal{O}_F$,
where the right-hand side is a disjoint union.

\item Any $F$-term $tf_i\in\mathbb{T}^n\cdot f_i\setminus \mathcal{O}_F$
is divisible by an $F$-term in~$\partial\mathcal{O}_F$.

\end{enumerate}
\end{proposition}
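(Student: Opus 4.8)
The plan is to establish the five statements more or less in the order they are listed, since each relies on the previous ones, and to reduce everything to combinatorial facts about order ideals of terms together with the defining property of an $F$-order ideal. The one genuinely new complication, which I expect to be the main obstacle, is that an $F$-term $t f_i$ does not determine $i$ uniquely (the same polynomial may lie in several of the sets $\mathcal{O}_j f_j$), so I must be careful to phrase every claim as a statement about the \emph{indexed} sets $\mathbb{T}^n\cdot f_i$ rather than about the underlying polynomials. Concretely, I would work with the disjoint-union bookkeeping $\coprod_{i=1}^m \mathbb{T}^n e_i$ and push it forward along the multiplication maps $e_i\mapsto f_i$; this keeps the arguments identical to the classical border-basis case component by component.

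For part (1), I would argue by induction on $k$. The case $k=0$ is Definition~\ref{defhigherborder}(3). For the step, $\overline{\partial^{k+1}\mathcal{O}_F}=\overline{\partial^k\mathcal{O}_F}\cup\partial^{k+1}\mathcal{O}_F$ by definition, and by induction $\overline{\partial^k\mathcal{O}_F}=\bigcup_{i=0}^k\partial^i\mathcal{O}_F$; so the only thing to check is disjointness, i.e.\ $\partial^{k+1}\mathcal{O}_F\cap\partial^i\mathcal{O}_F=\varnothing$ for $i\le k$. But $\partial^{k+1}\mathcal{O}_F=\partial(\overline{\partial^k\mathcal{O}_F})=(x_1\overline{\partial^k\mathcal{O}_F}\cup\cdots\cup x_n\overline{\partial^k\mathcal{O}_F})\setminus\overline{\partial^k\mathcal{O}_F}$ is by construction disjoint from $\overline{\partial^k\mathcal{O}_F}$, which contains all the lower $\partial^i\mathcal{O}_F$. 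For part (2), again induct on $k$: the base case is trivial, and for the step one shows $\overline{\partial^{k+1}\mathcal{O}_F}=\overline{\partial^k\mathcal{O}_F}\cup\partial(\overline{\partial^k\mathcal{O}_F})=\mathbb{T}^n_{\le 1}\cdot\overline{\partial^k\mathcal{O}_F}=\mathbb{T}^n_{\le 1}\cdot(\mathbb{T}^n_{\le k}\cdot\mathcal{O}_F)=\mathbb{T}^n_{\le k+1}\cdot\mathcal{O}_F$, using that each $\mathcal{O}_i$ is an order ideal so $\mathbb{T}^n_{\le 1}\cdot\mathbb{T}^n_{\le k}=\mathbb{T}^n_{\le k+1}$ on the term side. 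Part (3) is then immediate by combining (1) and (2): $\partial^k\mathcal{O}_F=\overline{\partial^k\mathcal{O}_F}\setminus\overline{\partial^{k-1}\mathcal{O}_F}=\mathbb{T}^n_{\le k}\cdot\mathcal{O}_F\setminus\mathbb{T}^n_{<k}\cdot\mathcal{O}_F=\mathbb{T}^n_k\cdot\mathcal{O}_F\setminus\mathbb{T}^n_{<k}\cdot\mathcal{O}_F$, where the last equality holds because any element of $\mathbb{T}^n_{\le k}\cdot\mathcal{O}_F$ that is \emph{not} in $\mathbb{T}^n_{<k}\cdot\mathcal{O}_F$ must already be in $\mathbb{T}^n_k\cdot\mathcal{O}_F$.

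For part (4), the disjointness of $\bigcup_{j\ge 0}\partial^j\mathcal{O}_F$ follows by letting $k\to\infty$ in (1). The equality $\bigcup_{i=1}^m\mathbb{T}^n\cdot f_i=\bigcup_{j\ge 0}\partial^j\mathcal{O}_F$ I would get from (2): taking the union over $k$ gives $\bigcup_{j\ge 0}\partial^j\mathcal{O}_F=\bigcup_k\overline{\partial^k\mathcal{O}_F}=\mathbb{T}^n\cdot\mathcal{O}_F=\mathbb{T}^n\cdot(\mathcal{O}_1 f_1\cup\cdots\cup\mathcal{O}_m f_m)=(\mathbb{T}^n\cdot\mathcal{O}_1)f_1\cup\cdots\cup(\mathbb{T}^n\cdot\mathcal{O}_m)f_m$, and since each $\mathcal{O}_i$ is a nonempty order ideal it contains $1$, whence $\mathbb{T}^n\cdot\mathcal{O}_i=\mathbb{T}^n$ and the right-hand side is exactly $\bigcup_{i=1}^m\mathbb{T}^n\cdot f_i$. (I should flag that here ``disjoint union'' on the left must be read with the same index-tracking convention, and note the harmless off-by-one in the statement's index range $j=0,\dots$ versus $i=0,\dots,m$.) Finally, for part (5): given $t f_i$ with $t\in\mathbb{T}^n$ and $t f_i\notin\mathcal{O}_F$, in particular $t\notin\mathcal{O}_i$, so by the classical fact about order ideals $t$ has a factorization $t=t' b$ with $b\in\partial\mathcal{O}_i$ a border term (pick any variable $x_j$ with $x_j\mid t$ and descend until you first leave $\mathcal{O}_i$). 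Then $b f_i\in\partial\mathcal{O}_F$ by definition of the border, and $t f_i=t'\cdot(b f_i)$ is divisible by it, as required. The only subtlety to watch is the case where $t f_i$ happens to coincide, as a polynomial, with some element already in $\mathcal{O}_F$ via a different index — but the statement is about the indexed object $t f_i\in\mathbb{T}^n\cdot f_i\setminus\mathcal{O}_F$, so this does not arise.
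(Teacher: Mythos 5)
Your proposal is correct and follows essentially the same route as the paper: induction on $k$ for (1) and (2), deducing (3) from $\partial^k\mathcal{O}_F=\overline{\partial^k\mathcal{O}_F}\setminus\overline{\partial^{k-1}\mathcal{O}_F}$ together with (2), and obtaining (4) by taking the union over all $k$. The only (minor) divergence is in (5), where the paper deduces the divisibility from (4) and (3) via $tf_i\in\partial^k\mathcal{O}_F$ for some $k\ge 1$, whereas you reduce directly to the classical fact that a term outside the order ideal $\mathcal{O}_i$ is divisible by a term of $\partial\mathcal{O}_i$; both arguments are fine, and your explicit handling of the index-tracking convention (the same polynomial may be an $F$-term in several ways) is a welcome clarification of a point the paper leaves implicit.
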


\begin{proof} First we show (1) by induction on~$k$. For $k=0$,
the claim follows from the definition. For $k=1$, we have
$\overline{\partial^1 \mathcal{O}_F}= \partial^0\mathcal{O}_F\cup
\partial^1\mathcal{O}_F$ by Definition~\ref{defhigherborder}.a.
Inductively, it follows that
$\overline{\partial^{k+1}\mathcal{O}_F}=
\overline{\partial^k\mathcal{O}_F}\cup \partial^{k+1}\mathcal{O}_F
= \bigcup_{i=0}^{k+1}\partial^i \mathcal{O}_F$.
This is a disjoint union, since
$\partial^{k+1}\mathcal{O}_F \cap \overline{\partial^k\mathcal{O}_F}
=\emptyset$ in each step.

Next we prove claim~(2). Again we proceed by induction on~$k$,
the case $k=0$ being obviously true. Inductively, we have
$\overline{\partial^{k+1}\mathcal{O}_F}=
\overline{\partial^k\mathcal{O}_F}\cup \partial^{k+1}\mathcal{O}_F
=\mathbb{T}^n_{\le k} \cdot \mathcal{O}_F \cup
\mathbb{T}^n_1\cdot (\mathbb{T}^n_{\le k} \cdot \mathcal{O}_F)
= \mathbb{T}^n_{\le k+1}\cdot \mathcal{O}_F$.

Claim~(3) is a consequence of~(2) and the equality
$\partial^k\mathcal{O}_F=
\overline{\partial^k\mathcal{O}_F}\setminus
\overline{\partial^{k-1}\mathcal{O}_F}$.
The fourth claim follows from the observation that, by~(2),
every $F$-term is in $\overline{\partial^k\mathcal{O}_F}$
for some $k\ge 0$.

Finally, claim~(5) holds because (4) implies that
$t f_i\in\partial^k \mathcal{O}_F$ for some $k\ge 1$,
and by~(3) this is equivalent to the existence
of a factorization $t=t't''$ where $\deg(t')=k-1$ and
$t''f_i\in \partial\mathcal{O}_F$.
\end{proof}

In view of this result, the following definition
appears natural.

\begin{definition}
Let $F=\{f_1,\dots,f_m\} \subset P\setminus \{0\}$, and let
$\mathcal{O}_F$ be an $F$-order ideal.

\begin{enumerate}
\item For an $F$-term $tf_i\in\mathcal{O}_F$,
we define $\indOF(tf_i)=
\min \{k\ge 0\mid tf_i\in\overline{\partial^k\mathcal{O}_F} \}$
and call it the {\it $\mathcal{O}_F$-index}\/
of~$tf_i$.

\item Given a non-zero polynomial $f\in J$, we write
$f=p_1 f_1 +\cdots +p_m f_m$ with $p_i\in P$
and we let $\mathcal{P}=(p_1f_1,\dots,p_mf_m)$.
Then the number
$$
\indOF(\mathcal{P})= \max\{ \indOF(t f_i) \mid
i\in \{1,\dots,m\},\; t\in \Supp(p_i)\}
$$
is called the {\it $\mathcal{O}_F$-index}\/ of the
{\it representation}~$\mathcal{P}$ of~$f$.
\end{enumerate}
\end{definition}

In other words, the $\mathcal{O}_F$-index of~$tf_i$ is the unique
number $k\ge 0$ such that $tf_i\in \partial^k\mathcal{O}_F$.
Note that the $\mathcal{O}_F$-index of a polynomial
$f\in J$ depends on the representation of~$f$ in terms
of the generators of~$J$. It is not clear how to find
a representation~$\mathcal{P}$ which yields the smallest
$\indOF(\mathcal{P})$. Using the Subideal Border Division
Algorithm, we shall address this point below.

The following proposition collects some basic properties
of the $\mathcal{O}_F$-index.

\begin{proposition}\label{IndexProps}
Let $F=\{f_1,\dots,f_m\} \subset P\setminus \{0\}$, and let
$\mathcal{O}_F$ be an $F$-order ideal.

\begin{enumerate}
\item For an $F$-term $tf_i\in\mathbb{T}^n\cdot f_i$,
the number $k=\indOF(t f_i)$ is
the smallest natural number such that there exists a
factorization $t=t' t''$ with a term $t'\in\mathbb{T}^n$
of degree~$k$ and with $t''f_i\in\mathcal{O}_F$.

\item Given $t\in\mathbb{T}^n$ and an $F$-term
$t'f_i\in \mathbb{T}^n\cdot f_i$, we have
$$
\indOF(t\,t'f_i) \le \deg(t)+\indOF(t'f_i).
$$

\item For $f,g\in J\setminus \{0\}$ such that $f+g\ne 0$, we
write $f=p_1f_1+\cdots + p_mf_m$ and $g=q_1f_1+\cdots+q_m f_m$
with $p_i,q_j\in P$, and we let $\mathcal{P}=(p_1f_1,\dots,p_mf_m)$
and $\mathcal{Q}=(q_1f_1,\dots,q_mf_m)$. Then we have
$$
\indOF(\mathcal{P}+\mathcal{Q})\le \max\{\indOF(\mathcal{P}),\,
\indOF(\mathcal{Q}) \}.
$$

\item Given $f\in J\setminus \{0\}$, we write
$f=p_1f_1+\cdots+p_mf_m$ with $p_i\in P$ and let
$\mathcal{P}=(p_1f_1,\dots,p_mf_m)$. For every $g\in P\setminus \{0\}$,
we then have
$$
\indOF(g\mathcal{P})\le \deg(g)+\indOF(\mathcal{P}).
$$
\end{enumerate}
\end{proposition}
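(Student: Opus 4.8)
The plan is to prove the four statements of Proposition~\ref{IndexProps} essentially by unwinding the definitions and invoking Proposition~\ref{BorderProps}, mostly parts~(2) and~(3). The work is routine bookkeeping once the right characterizations of the $\mathcal{O}_F$-index are in place, so the main task is to organize the arguments cleanly rather than to overcome a genuine obstacle.

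For part~(1), I would argue that $k=\indOF(tf_i)$ means, by definition, that $tf_i\in\partial^k\mathcal{O}_F$ (the unique such $k$, by Proposition~\ref{BorderProps}.1). For $k\ge 1$, Proposition~\ref{BorderProps}.3 says $tf_i\in\mathbb{T}^n_k\cdot\mathcal{O}_F\setminus\mathbb{T}^n_{<k}\cdot\mathcal{O}_F$, which translates exactly into: there is a factorization $t=t't''$ with $\deg(t')=k$ and $t''f_i\in\mathcal{O}_F$, and there is no such factorization with $\deg(t')<k$. The case $k=0$ is the statement $tf_i\in\mathcal{O}_F$, which matches the claim with $t'=1$. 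So part~(1) is just a restatement of Proposition~\ref{BorderProps}.3.

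For part~(2), let $k=\indOF(t'f_i)$, so by part~(1) we may write $t'=uv$ with $\deg(u)=k$ and $vf_i\in\mathcal{O}_F$. Then $t\,t'=(tu)v$ is a factorization with $\deg(tu)=\deg(t)+k$ and $vf_i\in\mathcal{O}_F$, and hence by Proposition~\ref{BorderProps}.2 (equivalently, part~(1) applied in the direction giving an upper bound) we get $\indOF(t\,t'f_i)\le\deg(t)+k$, which is the claim. For part~(3), note that $\mathcal{P}+\mathcal{Q}=((p_1+q_1)f_1,\dots,(p_m+q_m)f_m)$, and $\Supp(p_i+q_i)\subseteq\Supp(p_i)\cup\Supp(q_i)$; since $\indOF(\mathcal{P}+\mathcal{Q})$ is the maximum of $\indOF(tf_i)$ over $t$ in these supports, it is bounded by the maximum over $\Supp(p_i)\cup\Supp(q_i)$, which is $\max\{\indOF(\mathcal{P}),\indOF(\mathcal{Q})\}$. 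Part~(4) is similar: $g\mathcal{P}=(gp_1f_1,\dots,gp_mf_m)$, and every term in $\Supp(gp_i)$ divides a product $t't''$ with $t'\in\Supp(g)$, $t''\in\Supp(p_i)$; applying part~(2) with the splitting of such a term gives $\indOF(t'\,t''f_i)\le\deg(t')+\indOF(t''f_i)\le\deg(g)+\indOF(\mathcal{P})$, and taking the maximum over all relevant terms yields the bound. (One should be slightly careful that a term of $\Supp(gp_i)$ need not literally be a product of a term of $\Supp(g)$ with a term of $\Supp(p_i)$, but it always divides such a product; combining divisibility with part~(2) monotonicity still gives the estimate, since $\indOF$ of an $F$-term dividing $sf_i$ is at most $\indOF(sf_i)$ by part~(1).)

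The only point requiring a little care — and thus the closest thing to an obstacle — is the divisibility argument in part~(4): one must observe that if $wf_i$ divides $sf_i$ (i.e.\ $w\mid s$ in $\mathbb{T}^n$) then $\indOF(wf_i)\le\indOF(sf_i)$, which again follows from part~(1) by shrinking the exponent of the degree-$k$ factor. With that monotonicity lemma noted once, all four parts follow in a few lines each, and I would present them in the order (1), (2), (3), (4) as above.
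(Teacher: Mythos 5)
Your proof is correct and follows essentially the same route as the paper's (which is only a one-sentence sketch per part): (1) from Proposition~\ref{BorderProps}, (2) from (1), (3) from the inclusion $\Supp(p_i+q_i)\subseteq\Supp(p_i)\cup\Supp(q_i)$, and (4) from (2) applied to the decomposition of $g\mathcal{P}$ over $\Supp(g)$. One small remark: your caution in part (4) is unnecessary, since every term of $\Supp(gp_i)$ \emph{is} literally a product $t't''$ with $t'\in\Supp(g)$ and $t''\in\Supp(p_i)$ (a monomial in the support of a product cannot merely divide such a product, it must equal one), so the auxiliary monotonicity lemma is not needed --- although that lemma is itself true, precisely because the $\mathcal{O}_i$ are order ideals and hence closed under taking divisors.
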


\begin{proof}
The first claim follows from Prop.~\ref{BorderProps}.
The second claim follows from the first.
The third claim is a consequence of the fact that every $F$-term
appearing in $\mathcal{P}+\mathcal{Q}$ appears in~$\mathcal{P}$
or~$\mathcal{Q}$. The last claim follows from~(2) and
the observation that~$g\mathcal{P}$ is a $K$-linear combination of
tuples $t\mathcal{P}$ with $t\in\Supp(g)$.
\end{proof}

Now we have collected enough material to formulate and
prove the subideal version of the Border Division Algorithm.

\begin{algorithm}{\bf (The Subideal Border Division Algorithm)}\\
\label{sbdivalg}%
Let $F=\{f_1,\dots,f_m\}\subset P\setminus \{0\}$, let
$\mathcal{O}_F=\{t_1 f_{\alpha_1},\dots,t_\mu f_{\alpha_\mu}\}$
be an $F$-order ideal where $\alpha_i\in\{1,\dots,m\}$ and
$t_i\in\mathcal{O}_{\alpha_i}$, let $\partial\mathcal{O}_F=
\{b_1 f_{\beta_1},\dots,b_\nu f_{\beta_\nu}\}$ be its border, and let
$\{g_1,\dots,g_\nu\}$ be an $\mathcal{O}_F$-subideal border prebasis,
where $g_j= b_j f_{\beta_j} - \sum_{i=1}^\mu c_{ij} t_i f_{\alpha_i}$
with $c_{1j},\dots,c_{\mu j}\in K$ for $j=1,\dots,\nu$.
Given a polynomial $f\in J$, we write $f=p_1 f_1+\cdots+p_m f_m$
and consider the following instructions.

\begin{enumerate}
\item[{\bf D1}] Let $h_1=\cdots=h_\nu=0$, $c_1=\cdots =c_\mu=0$,
and $\mathcal{Q}=(q_1f_1,\dots,q_mf_m)$ with $q_i=p_i$
for $i=1,\dots,m$.

\item[{\bf D2}] If $\mathcal{Q}=(0,\dots,0)$
then return $(h_1,\dots,h_\nu,c_1,\dots,c_\mu)$ and stop.

\item[{\bf D3}] If $\indOF(\mathcal{Q})=0$ then find $c_1,\dots,c_\mu\in K$
such that $q_1f_1+\cdots+q_mf_m=c_1 t_1 f_{\alpha_1} + \cdots
+c_\mu t_\mu f_{\alpha_\mu}$.
Return $(h_1,\dots,h_\nu,c_1,\dots,c_\mu)$ and stop.

\item[{\bf D4}] If $\indOF(\mathcal{Q})>0$ then determine the
smallest index $i\in\{1,\dots,m\}$ such that there exists
a term $t\in\Supp(q_i)$ with $\indOF(tf_i)=\indOF(\mathcal{Q})$.
Choose such a term~$t$. Let $a\in K$ be the coefficient of~$t$ in~$q_i$.
Next, determine the smallest index
$j\in\{1,\dots,\nu\}$ such that~$t$ factors as
$t=t'\,t''$ with a term~$t'$ of degree $\indOF(tf_i)-1$
and with $t''f_i=b_j f_{\beta_j}\in\partial\mathcal{O}_F$.
Subtract the tuple corresponding to the representation
$$
a\, t'\, g_j= a\, t'\, b_j f_{\beta_j} - \sum_{i=1}^\mu
c_{ij}\, a\, t'\, t_i f_{\alpha_i}
$$
from~$\mathcal{Q}$, add~$a t'$ to~$h_j$,
and continue with step~{\bf D2}.

\end{enumerate}
\noindent This is an algorithm which returns a tuple
$(h_1,\dots,h_\nu,c_1,\dots,c_\mu)\in P^\nu \times K^\mu$ such that
$$
f= h_1 g_1 + \cdots + h_\nu g_\nu + c_1 t_1 f_{\alpha_1}+\cdots
+ c_\mu t_\mu f_{\alpha_\mu}
$$
and $\deg(h_i)\le \indOF(\mathcal{P})-1$ for $\mathcal{P}=
(p_1f_1,\dots,p_mf_m)$ and for all~$i\in\{1,\dots,\nu\}$
with $h_i\ne 0$. This representation does not depend on the
choice of the term~$t$ in step~{\bf D4}.
\end{algorithm}

\begin{proof}
First we show that all steps can be executed. In step~{\bf D3},
the condition $\indOF(\mathcal{Q})=0$ implies that
all $F$-terms $tf_i$ with $t\in\Supp(q_i)$ are contained
in~$\mathcal{O}_F$. In step~{\bf D4}, the definition of
$\indOF(\mathcal{Q})$ implies that a term~$t$ of the desired
kind exists. By Proposition~\ref{IndexProps}.1, this term~$t$
has a factorization $t=t't''$ with the desired properties.

Next we prove termination by showing that step~{\bf D4}
is performed only finitely many times. Let us investigate the
subtraction of the representation of $at'g_j$ from~$\mathcal{Q}$.
By the choice of~$t'$, the $\mathcal{O}_F$-index of $t' b_j f_{\beta_j}$
is $\deg(t')$ more than the $\mathcal{O}_F$-index
of $b_j f_{\beta_j}$. By Prop.~\ref{IndexProps}.b, this is
the maximal increase, and the $\mathcal{O}_F$-index of the other
$F$-terms in the representation of~$at'g_j$ is smaller
than $\indOF(\mathcal{Q})$. Thus the number of $F$-terms
in~$\mathcal{Q}$ of maximal $\mathcal{O}_F$-index
decreases by the subtraction, and after finitely many steps
the algorithm reaches step~{\bf D2} or~{\bf D3} and stops.

Finally, we prove correctness. To do so, we show that the equality
$$
f=q_1f_1+\cdots+q_mf_m + h_1 g_1 + \cdots + h_\nu g_\nu
+ c_1 t_1 f_{\alpha_1} +\cdots + c_\mu t_\mu f_{\alpha_\mu}
$$
is an invariant of the algorithm. It is satisfied at the end of step~{\bf D1}.
The constants $c_1,\dots,c_\mu$ are only changed in step~{\bf D3}.
In this case the contribution $q_1f_1 + \cdots + q_mf_m$
to the above equality is replaced by the equal contribution
$c_1 t_1 f_{\alpha_1} +\cdots + c_\mu t_\mu f_{\alpha_\mu}$.
The tuple~$\mathcal{Q}$ is only changed in step~{\bf D4}.
There the subtraction of the representation of~$at'g_j$
from~$\mathcal{Q}$ and the corresponding change in
$q_1f_1+\cdots+q_m f_m$ are compensated by the
addition of $at'$ to~$h_j$ and the corresponding change
in $h_1g_1+\cdots+h_\nu g_\nu$.
When the algorithm stops, we have $q_1=\cdots=q_m=0$.
This proves the claimed representation of~$f$.
Moreover, only terms of degree
$\deg(t')\le \indOF(\mathcal{Q})-1\le \indOF(\mathcal{P})-1$ are
added to~$h_j$.

The additional claim that the result of the algorithm does not
depend on the choice of~$t$ in step~{\bf D4}
follows from the observation that~$tf_i$ is replaced by $F$-terms
of strictly smaller $\mathcal{O}_F$-index. Thus the different
executions of step~{\bf D4} corresponding to the reduction of
several $F$-terms of maximal $\mathcal{O}_F$-index in~$\mathcal{Q}$
do not interfere with one another, and the final result -- after
all those $F$-terms have been rewritten -- is independent of the
order in which they are taken care of.
\end{proof}

Notice that in step~{\bf D4} the algorithm uses a term~$t$
which is not uniquely determined.
Also there may be several factorizations of~$t$.
We choose the indices~$i$ and~$j$ minimally to determine
this step of the algorithm uniquely, but this particular choice
is not forced upon us. Moreover, it is clear that the result of
the division depends on the numbering of the elements
of~$\partial\mathcal{O}_F$.

As indicated above, the Subideal Border Division Algorithm has
important implications. The following corollaries comprise a few
of them.

\begin{corollary}{\bf (Subideal Border Bases and Special Generation)}%
\label{specialgen}\\
In the setting of the algorithm, let $I=\langle G\rangle$.
Then the set~$G$ is an $\mathcal{O}_F$-subideal
border basis of~$I$ if and only if one of the following equivalent
conditions is satisfied.

\begin{enumerate}
\item[$(A_1)$] For every non-zero polynomial $f\in I\cap J$
with a representation $f=p_1 f_1+\cdots+p_m f_m$ and
$\mathcal{P}=(p_1 f_1,\dots,p_m f_m)$,
there exist polynomials $h_1,\dots,h_\nu\in P$ such that
$f=h_1g_1 +\cdots + h_\nu g_\nu$ and $\deg(h_i)\le
\indOF(\mathcal{P})-1$ whenever $h_i g_i\ne 0$.

\item[$(A_2)$] For every non-zero polynomial $f\in I\cap J$
with a representation $f=p_1 f_1+\cdots+p_m f_m$ and
$\mathcal{P}=(p_1 f_1,\dots,p_m f_m)$,
there exist $h_1,\dots,h_\nu\in P$ such that
$f=h_1g_1 +\cdots + h_\nu g_\nu$ and
$\max\{\deg(h_i)\mid i\in\{1,\dots,\nu\},\,h_i g_i\ne 0\} =
\indOF(\mathcal{P})-1$.
\end{enumerate}
\end{corollary}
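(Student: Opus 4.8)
The plan is to establish the corollary by proving the chain of implications ``$G$ is an $\mathcal{O}_F$-subideal border basis $\Rightarrow (A_1) \Rightarrow (A_2) \Rightarrow$ $G$ is an $\mathcal{O}_F$-subideal border basis''. The heart of the matter is the forward implication, and for this I would lean directly on the Subideal Border Division Algorithm (Algorithm~\ref{sbdivalg}).

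First I would prove ``border basis $\Rightarrow (A_1)$''. Let $f\in I\cap J$ be non-zero with a representation $\mathcal{P}=(p_1f_1,\dots,p_mf_m)$. Run the Subideal Border Division Algorithm on $f$ with input representation $\mathcal{P}$; it returns $(h_1,\dots,h_\nu,c_1,\dots,c_\mu)$ with $f = \sum_j h_j g_j + \sum_i c_i t_i f_{\alpha_i}$ and $\deg(h_j)\le \indOF(\mathcal{P})-1$ whenever $h_j g_j\ne 0$. Since $G\subseteq I$, we get $\sum_i c_i t_i f_{\alpha_i} = f - \sum_j h_j g_j \in I$; but this element also lies in $J$, hence in $I\cap J$, so its residue class in $J/(I\cap J)$ is zero. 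Because $G$ is an $\mathcal{O}_F$-subideal border basis, the residue classes of the $t_i f_{\alpha_i}$ are a $K$-basis of $J/(I\cap J)$, forcing $c_1=\cdots=c_\mu=0$. Thus $f=\sum_j h_j g_j$ with the required degree bound, which is exactly $(A_1)$.

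Next, ``$(A_1)\Rightarrow(A_2)$'' should be essentially immediate: $(A_1)$ gives a representation with $\max\deg(h_i)\le \indOF(\mathcal{P})-1$, and the reverse inequality holds because any representation $f=\sum_j h_j g_j$ automatically forces an $\mathcal{O}_F$-index at least $\indOF(\mathcal{P})$ on the right-hand side (each $g_j$ has border term of index $1$, so $h_j g_j$ has index $\le \deg(h_j)+1$ by Proposition~\ref{IndexProps}, and since $f$ itself has the representation $\mathcal{P}$ on one side and $\sum h_j g_j$ expanded in the $f_i$ on the other, one compares indices via Proposition~\ref{IndexProps}.c). One has to be a little careful that this lower bound argument is legitimate for \emph{every} representation $\mathcal{P}$; I would phrase it as: if $\max\deg(h_i) < \indOF(\mathcal{P})-1$ then expanding $\sum_j h_j g_j$ back into the generators $f_i$ produces a representation of $f$ of $\mathcal{O}_F$-index $< \indOF(\mathcal{P})$ — which is not a contradiction by itself (indices depend on the representation), so in fact $(A_1)\Rightarrow(A_2)$ must be read as ``there exists a representation achieving equality'', and equality is obtained simply by noting that the division-algorithm output achieves $\deg(h_i)\le\indOF(\mathcal{P})-1$ and that at least one $F$-term of maximal index in $\mathcal{P}$ must be rewritten, producing an $h_j$ of degree exactly $\indOF(\mathcal{P})-1$. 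This last point — that equality (not just $\le$) is forced — is the step I expect to be the main obstacle, and I would handle it by tracking the very first execution of step~{\bf D4}, where a term $t$ with $\indOF(tf_i)=\indOF(\mathcal{P})$ is rewritten using $at'g_j$ with $\deg(t')=\indOF(\mathcal{P})-1$, contributing $at'$ to $h_j$; one then checks this top-degree contribution to $h_j$ is never cancelled later, since all subsequent reductions involve terms of strictly smaller $\mathcal{O}_F$-index and hence (via Proposition~\ref{IndexProps}.a) cofactors $t'$ of strictly smaller degree.

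Finally, ``$(A_2)\Rightarrow$ border basis''. We already have $I=\langle G\rangle$ and $G$ is an $\mathcal{O}_F$-subideal border prebasis with $G\subseteq I$, so it remains only to show the residue classes of $\mathcal{O}_F$ form a $K$-basis of $J/(I\cap J)$. By Proposition~\ref{BorderProps}, applying the division algorithm (or repeated rewriting by the prebasis relations) shows these residue classes \emph{span} $J/(I\cap J)$. For linear independence, suppose $\sum_i c_i t_i f_{\alpha_i}\in I\cap J$ is a non-zero relation; it is a non-zero element of $I\cap J$ with representation $\mathcal{P}$ of $\mathcal{O}_F$-index $0$. Applying $(A_2)$ to it would give $\max\{\deg(h_i)\} = \indOF(\mathcal{P})-1 = -1$, which is impossible for a non-zero sum $\sum h_j g_j$ (a sum with no non-zero $h_j$ is zero, but $\sum_i c_i t_i f_{\alpha_i}\ne 0$). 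This contradiction establishes linear independence and completes the equivalence. I would close by remarking that $(A_1)$ and $(A_2)$ are used as drop-in analogues of the classical ``special generation'' characterization of border bases from~\cite{KR2}, 6.4.17.
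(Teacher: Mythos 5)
Your overall architecture coincides with the paper's: the cyclic chain ``border basis $\Rightarrow (A_1) \Rightarrow (A_2) \Rightarrow$ border basis'', with the first implication obtained by running the Subideal Border Division Algorithm~\ref{sbdivalg} and killing the normal remainder via the basis property of $\mathcal{O}_F$ modulo $I\cap J$, and the last implication obtained by applying the condition to a non-zero element of $I\cap J\cap\langle\mathcal{O}_F\rangle_K$, whose representation has $\mathcal{O}_F$-index $0$, to force the absurd bound $\max\{\deg(h_i)\}=-1$. These two steps match the paper's proof essentially word for word (the paper gets spanning from Corollary~\ref{sbpgenerates}, as you do).

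Where you genuinely diverge is $(A_1)\Rightarrow(A_2)$. The paper argues abstractly via Proposition~\ref{IndexProps}: if every $h_i$ had $\deg(h_i)<\indOF(\mathcal{P})-1$, then each $h_ig_i$, hence their sum, would admit a representation of $\mathcal{O}_F$-index below $\indOF(\mathcal{P})$, from which it concludes that some $h_i$ attains degree $\indOF(\mathcal{P})-1$. You rightly note that this is delicate because the index is attached to a representation rather than to $f$, and you replace it by a concrete analysis of the division algorithm: the first execution of {\bf D4} contributes a monomial $at'$ of degree exactly $\indOF(\mathcal{P})-1$ to some $h_j$, and this monomial survives. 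Two caveats on your version. First, your ``never cancelled'' justification is slightly off: the other $F$-terms of maximal index are also reduced with cofactors of the \emph{same} degree $\indOF(\mathcal{P})-1$, not smaller; what actually saves the argument is that two reductions contributing the same monomial $t'$ to the same $h_j$ would be reducing the same border multiple $t'b_jf_{\beta_j}$, which cannot happen because no $F$-term of maximal index is ever recreated by a reduction. Second, and more substantively, you apply the equality argument to the output of the division algorithm without showing that its normal remainder vanishes under $(A_1)$ alone; if the remainder is non-zero, $f=\sum_j h_jg_j+\NR$ is not an $(A_2)$-representation. The fix is short --- apply $(A_1)$ to the remainder itself, whose natural representation has index $0$, to conclude it is zero --- but it must be said, since it is precisely where the hypothesis $(A_1)$ (as opposed to the mere prebasis structure) enters. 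With these two patches your route is complete and, for the middle implication, more self-contained than the paper's.
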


\begin{proof}
First we show that $(A_1)$ holds if~$G$ is an
$\mathcal{O}_F$-border basis.
The Subideal Border Division Algorithm computes a representation
$f=h_1g_1+\cdots +h_\nu g_\nu + c_1 t_1 f_{\alpha_1}+\cdots
+c_\mu t_\mu f_{\alpha_\mu}$
with $h_1,\dots,h_\nu \in P$ and $c_1,\dots,c_\mu\in K$
such that $\deg(h_i)\le \indOF(\mathcal{P})-1$ for $i=1,\dots,\nu$.
Then $c_1 t_1 f_{\alpha_1} +\cdots + c_\mu t_\mu f_{\alpha_\mu}
\equiv 0$ modulo~$I$, and the
hypothesis implies $c_1=\cdots =c_\mu=0$.

Next we prove that $(A_1)$ implies $(A_2)$.
If $\deg(h_i)< \indOF(\mathcal{P})-1$,
then Prop.~\ref{IndexProps}.2 shows that
the $\mathcal{O}_F$-index of every representation
of~$h_ig_i$ is at most $\deg(h_i)+1$ and hence
smaller than $\indOF(\mathcal{P})$.
By Prop.~\ref{IndexProps}.4, there has to be at least one
number $i\in\{1,\dots,\nu\}$ such that
$\deg(h_i)=\indOF(\mathcal{P})-1$.

Finally, we assume $(A_2)$ and show the subideal border basis
property. Let $c_1,\dots,c_\mu\in K$
satisfy $c_1 t_1 f_{\alpha_1}+\cdots +c_\mu t_\mu f_{\alpha_\mu}\in I\cap J$.
Then either $f=c_1 t_1 f_{\alpha_1}+\cdots+c_\mu t_\mu f_{\alpha_\mu}$
equals the zero polynomial or not.
In the latter case we apply $(A_2)$ and obtain
a representation~$f=h_1g_1+\cdots+h_\nu g_\nu$ with
$h_1,\dots,h_\nu\in P$. Since $f\ne 0$,
we have $\max\{\deg(h_i)\mid i\in\{1,\dots,\nu\},\,h_i g_i\ne 0\}
\ge 0$. But $\indOF(\mathcal{P})-1=-1$ is in contradiction to
the second part of~$(A_2)$.
Hence we must have $f=0$. Thus $I\cap J \cap\langle \mathcal{O}_F\rangle_K=0$,
i.e.\ the set~$G$ is an $\mathcal{O}_F$-subideal border basis of~$I$.
\end{proof}

\begin{definition}
In the setting of the algorithm, let $\mathcal{G}=
(g_1,\dots,g_\nu)$. Then the polynomial
$$
\NR_{\mathcal{O}_F,\mathcal{G}}(\mathcal{P})=
c_1 t_1 f_{\alpha_1} + \cdots + c_\mu t_\mu f_{\alpha_\mu}
$$
is called the {\it normal remainder} of the
representation~$\mathcal{P}=(p_1f_1,\dots,p_mf_m)$
of~$f$ with respect to~$\mathcal{G}$.
\end{definition}

Clearly, the normal remainder depends on the choice of the
representation~$\mathcal{P}$. It has the following
application.

\begin{corollary}\label{sbpgenerates}
In the setting of the algorithm, the residue classes of the elements
of~$\mathcal{O}_F$ generate the image of the ideal~$J$ in
$P/\langle G\rangle$ as a $K$-vector space.

In other words, the residue class of every polynomial $f\in J$
can be represented as a $K$-linear combination of the
residue classes $\{\bar t_1 \bar f_{\alpha_1},\dots,
\bar t_\mu \bar f_{\alpha_\mu} \}$. Indeed, such a representation
can be found by computing the normal
remainder $\NR_{\mathcal{O}_F,\mathcal{G}}(\mathcal{P})$
for $\mathcal{G}=(g_1,\dots,g_\nu)$ and the representation
$\mathcal{P}=(p_1 f_1,\dots,p_m f_m)$ of
$f=p_1 f_1+\cdots+p_m f_m$.
\end{corollary}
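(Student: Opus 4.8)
The plan is to read the statement off directly from the Subideal Border Division Algorithm (Algorithm~\ref{sbdivalg}). First I would take an arbitrary polynomial $f\in J$. Since $J=\langle f_1,\dots,f_m\rangle$, there is a representation $f=p_1f_1+\cdots+p_mf_m$ with $p_i\in P$; I fix one such representation and set $\mathcal{P}=(p_1f_1,\dots,p_mf_m)$. Feeding $f$ to the algorithm via this representation produces a tuple $(h_1,\dots,h_\nu,c_1,\dots,c_\mu)\in P^\nu\times K^\mu$ with
$$
f=h_1g_1+\cdots+h_\nu g_\nu + c_1 t_1 f_{\alpha_1}+\cdots+c_\mu t_\mu f_{\alpha_\mu},
$$
and by definition the trailing sum is exactly $\NR_{\mathcal{O}_F,\mathcal{G}}(\mathcal{P})$ for $\mathcal{G}=(g_1,\dots,g_\nu)$.

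Next I would pass to residue classes modulo $\langle G\rangle$. Each $g_j$ lies in $\langle G\rangle$, so the term $h_1g_1+\cdots+h_\nu g_\nu$ vanishes in $P/\langle G\rangle$, and therefore
$$
\bar f = c_1 \bar t_1 \bar f_{\alpha_1}+\cdots+c_\mu \bar t_\mu \bar f_{\alpha_\mu}
= \overline{\NR_{\mathcal{O}_F,\mathcal{G}}(\mathcal{P})}
$$
in $P/\langle G\rangle$. This exhibits the residue class of~$f$ as a $K$-linear combination of the residue classes of the $F$-terms in~$\mathcal{O}_F$, and it simultaneously shows that such a combination can be obtained by computing the normal remainder of~$\mathcal{P}$, which proves both assertions of the corollary. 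Since $f$ was an arbitrary element of~$J$, the residue classes of the elements of~$\mathcal{O}_F$ span the image of~$J$ in $P/\langle G\rangle$ as a $K$-vector space.

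There is essentially no obstacle here, since the real work — termination and the correctness of the output representation — has already been carried out in the proof of Algorithm~\ref{sbdivalg}; the degree bound $\deg(h_i)\le\indOF(\mathcal{P})-1$ is not even needed for this corollary. The only point worth a sentence is that the algorithm is applicable to \emph{every} element of~$J$: membership in $J=\langle F\rangle$ means precisely that a representation $f=\sum_i p_i f_i$ exists, and such a representation is exactly the input the algorithm expects.
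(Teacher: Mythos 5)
Your argument is correct and coincides with the paper's own proof: apply the Subideal Border Division Algorithm to a representation of $f\in J$ and reduce the resulting identity modulo $\langle G\rangle$, observing that the trailing sum is by definition the normal remainder. The extra detail you supply (why every $f\in J$ admits a valid input representation, and that the degree bound is not needed) is harmless elaboration of the same route.
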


\begin{proof}
By the algorithm, every $f\in J$ can be represented
in the form $f=h_1 g_1+\dots+h_\nu g_\nu+c_1 t_1 f_{\alpha_1}
+\dots + c_\mu t_\mu f_{\alpha_\mu}$,
where $h_1,\dots,h_\nu\in P$ and $c_1,\dots,c_\mu\in K$.
Forming residue classes modulo $\langle G\rangle$
yields the claim.
\end{proof}

Our last corollary provides another motivation for the name
``subideal border basis''.

\begin{corollary}\label{sbbgenerate}
In the setting of the algorithm, let~$G$
be an $\mathcal{O}_F$-subideal border basis of an ideal $I\subset P$.
Then~$G$ generates the ideal $I\cap J$.
\end{corollary}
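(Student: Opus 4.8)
The plan is to establish the two inclusions $\langle G\rangle\subseteq I\cap J$ and $I\cap J\subseteq\langle G\rangle$ separately, so that together they yield $\langle G\rangle=I\cap J$.

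For the inclusion $\langle G\rangle\subseteq I\cap J$, I would argue as follows. By the definition of an $\mathcal{O}_F$-subideal border basis we already have $G\subseteq I$. Moreover, each prebasis element $g_j=b_j f_{\beta_j}-\sum_{i=1}^\mu c_{ij}\,t_i f_{\alpha_i}$ is a $K$-linear combination of $F$-terms, and every $F$-term $t f_i$ is a polynomial multiple of $f_i$, hence an element of $J=\langle f_1,\dots,f_m\rangle$. Therefore $g_j\in J$ as well, so $G\subseteq I\cap J$, and since $I\cap J$ is an ideal this gives $\langle G\rangle\subseteq I\cap J$.

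For the reverse inclusion, take an arbitrary non-zero $f\in I\cap J$. Using $f\in J$, write $f=p_1f_1+\cdots+p_mf_m$ and set $\mathcal{P}=(p_1f_1,\dots,p_mf_m)$. Applying the Subideal Border Division Algorithm~\ref{sbdivalg} produces a representation $f=h_1g_1+\cdots+h_\nu g_\nu+c_1t_1f_{\alpha_1}+\cdots+c_\mu t_\mu f_{\alpha_\mu}$ with $h_i\in P$ and $c_i\in K$. The element $c_1t_1f_{\alpha_1}+\cdots+c_\mu t_\mu f_{\alpha_\mu}=f-(h_1g_1+\cdots+h_\nu g_\nu)$ lies in $I$ (since $f\in I$ and $G\subseteq I$) and, being a $K$-combination of $F$-terms, also lies in $J$; hence it belongs to $I\cap J$, so its residue class in $J/(I\cap J)$ is zero. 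Because $G$ is a subideal border basis of $I$, the residue classes of the $F$-terms in $\mathcal{O}_F$ form a $K$-basis of $J/(I\cap J)$, which forces $c_1=\cdots=c_\mu=0$. Thus $f=h_1g_1+\cdots+h_\nu g_\nu\in\langle G\rangle$. (Alternatively, one can simply invoke condition $(A_1)$ of Corollary~\ref{specialgen}, which every subideal border basis satisfies, and combine it with the first inclusion.)

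I do not expect a genuine obstacle here. The one point that deserves a moment's care is the verification that $G\subseteq J$, so that the first inclusion lands in $I\cap J$ rather than merely in $I$; this is immediate from the shape of the prebasis relations. The rest is a direct application of the division algorithm together with the defining basis property of $\mathcal{O}_F$, in close parallel to the argument already carried out in the proof of Corollary~\ref{specialgen}.
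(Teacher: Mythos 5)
Your proposal is correct and follows essentially the same route as the paper: both inclusions are established the same way, with the reverse inclusion obtained by applying the Subideal Border Division Algorithm to $f\in I\cap J$ and using the linear independence of the residue classes of the $F$-terms in $\mathcal{O}_F$ to force $c_1=\cdots=c_\mu=0$. Your extra care in checking $G\subseteq J$ (which the paper dismisses with ``by definition'') is a harmless elaboration, not a different argument.
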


\begin{proof} By definition, we have  $\langle g_1,\dots,g_\nu\rangle
\subseteq I\cap J$.
To prove the converse inclusion, let $f\in I\cap J$. Using the Subideal
Border Division Algorithm, the polynomial~$f$ can be expanded as
$f=h_1 g_1+\dots+h_\nu g_\nu+c_1 t_1 f_{\alpha_1} +\dots +
c_\mu t_\mu f_{\alpha_\mu}$,
where $h_1,\dots,h_\nu\in P$ and $c_1,\dots,c_\mu\in K$.
This implies the equality of residue classes
$0=\bar f= c_1 \bar t_1 \bar f_{\alpha_1} +\dots +
c_\mu \bar t_\mu \bar f_{\alpha_\mu}$ in $P/I$.
By assumption, the residue classes $\bar t_1 \bar f_{\alpha_1},
\dots,\bar t_\mu \bar f_{\alpha_\mu}$
form a $K$-vector space basis of $(I+J)/I$.
Hence $c_1=\dots=c_\mu=0$, and the expansion of~$f$ yields
$f=h_1 g_1+\dots+h_\nu g_\nu \in \langle G\rangle$.
\end{proof}

\bigskip

\section{The Subideal Version of the BM-Algorithm}

Let $K$ be a field, let $P=K[x_1,\dots,x_n]$ be the polynomial ring
in~$n$ indeterminates over~$K$, equipped with the standard grading, and let
$\mathbb{T}^n$ be the monoid of terms in~$P$. Given a finite set of points
$\mathbb{X}=\{p_1,\dots,p_s\} \subseteq K^n$, we let $\eval:
P \longrightarrow K^s$ be the evaluation map
$\eval(f)=(f(p_1),\dots,f(p_s))$ associated to~$\mathbb{X}$.
It is easy to adjust the Buchberger-M\"oller Algorithm (BM-Algorithm)
so that it computes a border basis of the {\it vanishing ideal}
$$
I_{\mathbb{X}}= \langle f\in P \mid f(p_1)= \cdots = f(p_s)=0\rangle
=\ker(\eval) \subseteq P
$$
of~$\mathbb{X}$. Since we use a version which differs slightly from the standard
formulation (see for instance~\cite{BM} or~\cite{KR2}, Thm.~6.3.10),
let us briefly recall its main steps.

\begin{algorithm}{\bf (BM-Algorithm for Border Bases)}\\
Let $\mathbb{X}=\{p_1,\dots,p_s\}\subseteq K^n$ be a set of
points given by their coordinates, and let~$\sigma$
be a degree compatible term ordering on~$\mathbb{T}^n$.
The following instructions define an algorithm which computes
the order ideal $\mathcal{O}_\sigma(I)=\mathbb{T}^n\setminus
\LT_\sigma(I_{\mathbb{X}})$ and the $\mathcal{O}_\sigma(I_{\mathbb{X}})$-border
basis~$G$ of~$I_{\mathbb{X}}$.

\begin{enumerate}
\item[{\bf B1}] Let $d=0$, $\mathcal{O}=\{1\}$, $G=\emptyset$,
and $\mathcal{M}=(1,\dots,1)^{\rm tr}\in\Mat_{s,1}(K)$.

\item[{\bf B2}] Increase~$d$ by one and let~$L=[t_1,\dots,t_\ell]$
be the list of all terms of degree~$d$ in~$\partial\mathcal{O}$,
ordered decreasingly w.r.t.~$\sigma$.
If $L=\emptyset$, return $(\mathcal{O},G)$ and stop.

\item[{\bf B3}] Form the matrix $\mathcal{A}=(\eval(t_1)\mid \cdots \mid \eval(t_\ell)
\mid \mathcal{M})$ and compute a matrix~$\mathcal{B}$ whose rows are a basis of the
kernel of~$\mathcal{A}$.

\item[{\bf B4}] Reduce~$\mathcal{B}$ to a matrix
$\mathcal{C}=(c_{ij}) \in\Mat_{k,\ell+m}(K)$ in row echelon form.

\item[{\bf B5}] For all $j\in\{1,\dots,\ell\}$ such that there exists
an $i\in\{1,\dots,k\}$ with pivot index $\nu(i)=j$, append the polynomial
$$
t_j + \sum_{j'=j+1}^\ell c_{ij'} t_{j'} + \sum_{j'=\ell+1}^{\ell+m} c_{ij'}u_{j'}
$$
to the list~$G$, where $u_{j'}$ is the $(j'-\ell)^{\rm th}$ element of~$\mathcal{O}$.

\item[{\bf B6}] For all $j=\ell,\ell-1,\dots,1$ such that the $j^{\rm th}$ column
of~$\mathcal{C}$ contains no pivot element, append the term~$t_j$
as a new first element to~$\mathcal{O}$, append the column $\eval(t_j)$ as a new
first column to~$\mathcal{M}$, and continue with step~{\bf B2}.

\end{enumerate}
\end{algorithm}

The proof of this modified version is simply obtained by
combining all the iterations of the usual BM-Algorithm corresponding to
terms of degree~$d$ into one ``block''. The fact that we put the
terms of degree~$d$ in~$\partial\mathcal{O}$ into~$L$
in step~{\bf B2} effects the computation of the entire border basis,
rather than just the reduced $\sigma$-Gr\"obner basis of~$\IX$
(see~\cite{HKPP}, Thm.\ 3.3). A further elaboration is beyond the scope of
the present paper and is left to the interested reader.

Given~$\mathbb{X}$ and a polynomial ideal $J=\langle F\rangle$
with $F=\{f_1,\dots,f_m\}\subset P\setminus \{0\}$, we know that
the vanishing ideal~$\IX$ has an $\mathcal{O}_\sigma(I)_F$-subideal
border basis. The following generalization of the
BM-algorithm computes this subideal border basis.

\begin{algorithm}{\bf (Subideal Version of the BM-Algorithm)}\label{SBMalg}\\
Let $\mathbb{X}=\{p_1,\dots,p_s\}\subseteq K^n $ be a set of
points given by their coordinates, let~$\sigma$ be a degree
compatible term ordering, and let $F=\{f_1,\dots,f_m\}\subset P\setminus \{0\}$
be a set of polynomials which generate an ideal $J=\langle F\rangle$.
The following instructions define an algorithm which computes an
$F$-order ideal $\mathcal{O}_\sigma(I)_F$ and the
$\mathcal{O}_\sigma(I)_F$-subideal border basis~$G$ of~$\IX$.

\begin{enumerate}
\item[{\bf S1}] Let $d=\min\{\deg(f_1),\dots,\deg(f_m)\}-1$,
$\mathcal{O}_F=\emptyset$, $G=\emptyset$,
and $\mathcal{M}\in\Mat_{s,0}(K)$.

\item[{\bf S2}] Increase~$d$ by one. Let $L=[t_1 f_{\alpha_1},\dots,t_\ell
f_{\alpha_\ell}]$ be the list of all $F$-terms of degree~$d$
in~$F\cup\partial\mathcal{O}_F$, with their leading terms
ordered decreasingly w.r.t.~$\sigma$.
If then $L=\emptyset$ and $d\ge \max\{\deg(f_1),\dots,\deg(f_m)\}$,
return $(\mathcal{O}_F,G)$ and stop.

\item[{\bf S3}] Form the matrix $\mathcal{A}=(\eval(t_1 f_{\alpha_1})\mid \cdots
\mid \eval(t_\ell f_{\alpha_\ell}) \mid \mathcal{M})$ and compute a matrix~$\mathcal{B}$
whose rows are a basis of the kernel of~$\mathcal{A}$.

\item[{\bf S4}] Reduce~$\mathcal{B}$ to a matrix
$\mathcal{C}=(c_{ij}) \in\Mat_{k,\ell+m}(K)$ in reduced row echelon form.

\item[{\bf S5}] For all $j\in\{1,\dots,\ell\}$ such that there exists
an $i\in\{1,\dots,k\}$ with pivot index $\nu(i)=j$, append the polynomial
$$
t_j f_{\alpha_j}+ \sum_{j'=j+1}^\ell c_{ij'} t_{j'}f_{\alpha_{j'}} +
\sum_{j'=\ell+1}^{\ell+m} c_{ij'}u_{j'}
$$
to the list~$G$, where $u_{j'}$ is the $(j'-\ell)^{\rm th}$ element
of~$\mathcal{O}_F$.

\item[{\bf S6}] For all $j=\ell,\ell-1,\dots,1$ such that the $j^{\rm th}$ column
of~$\mathcal{C}$ contains no pivot element, append the $F$-term~$t_j f_{\alpha_j}$
as a new first element to~$\mathcal{O}_F$, append the column $\eval(t_j f_{\alpha_j})$
as a new first column to~$\mathcal{M}$, and continue with step~{\bf S2}.

\end{enumerate}
\end{algorithm}

\begin{proof}
First we show finiteness. When a new degree is started in step~{\bf S2},
the matrix~$\mathcal{M}$ has $m=\#\mathcal{O}_F$ columns where~$\mathcal{O}_F$
is the {\it current} list of $F$-terms. In step~{\bf S6} we
enlarge~$\mathcal{M}$ by new first columns which are linearly independent of
the other columns. This can happen only finitely many times. Eventually
we arrive at a situation where all new columns $\eval(t_i f_{\alpha_i})$
of~$\mathcal{A}$ in step~{\bf S3} are linearly dependent on the previous
columns, and therefore the corresponding column of~$\mathcal{C}$
contains a pivot element.
Consequently, no elements are appended to~$\mathcal{O}_F$ in that
degree and we get $L=\emptyset$ in the next degree. Hence the algorithm stops.

Now we show correctness. The columns
of~$\mathcal{A}$ are the evaluation vectors of $F$-terms whose
leading terms are ordered decreasingly w.r.t.~$\sigma$.
A row $(c_{i1},\dots,c_{i\,\ell{+}m})$ of~$\mathcal{C}$ corresponds
to a linear combination of these $F$-terms whose evaluation vector
is zero. Let $g_1,\dots,g_k$ be the polynomials given
by these linear combinations of $F$-terms. Clearly, we have $g_i\in
\IX \cap J$.

The evaluation vectors of the $F$-terms which are put into~$\mathcal{O}_F$
in step~{\bf S6} are linearly independent of the evaluation vectors of the $F$-terms
in the previous set~$\mathcal{O}_F$ since there is no linear relation
leading to a pivot element in the corresponding column of~$\mathcal{C}$.
Inductively it follows that the evaluation vectors of the $F$-terms
in~$\mathcal{O}_F$ are always linearly independent. Henceforth
the pivot elements of~$\mathcal{C}$ are always in the ``new'' columns
and the polynomials~$g_i$ have degree~$d$.
By the way the algorithm proceeds, every $F$-term in the border of the final
set~$\mathcal{O}_F$ appears in exactly one on the elements of~$G$.
All the other summands of a polynomial $g_i$ are in~$\mathcal{O}_F$.
Hence the final set~$G$ is an $\mathcal{O}_F$-subideal border prebasis.

Furthermore, every $F$-term is either in~$\mathcal{O}_F$ or it is
a multiple of an $F$-term in $\partial\mathcal{O}_F$
(cf.\ Prop.~\ref{IndexProps}.5). In the latter case, its
evaluation vector can be written as a linear combination of the
evaluation vectors of the elements of~$\mathcal{O}_F$.
Thus the evaluation vectors of the elements of~$\mathcal{O}_F$
generate the space of all evaluation vectors of $F$-terms.
Since they are linearly independent, they form a $K$-basis of that space.
Now we use the facts that evaluation yields an isomorphism of
$K$-vector spaces $\overline{{\rm eval}}: P/I \longrightarrow K^s$
and that the residue classes of the $F$-terms generate the
$K$-vector subspace $(I+J)/I$ of~$P/I$ to conclude that
the residue classes of the $F$-terms in the final set~$\mathcal{O}_F$
form a $K$-basis of $(I+J)/I$.
\end{proof}

Let us illustrate this algorithm by an example.

\begin{example}\label{SBMexample}
In the polynomial ring $P=\mathbb{Q}[x,y,z]$, we consider the ideal
$J=\langle F\rangle$ with $F=\{f_1,\,f_2\}$ given by $f_1=x^2-1$ and $f_2=y-z$.
Let $\sigma={\tt DegRevLex}$.

We want to compute an $\mathcal{O}_F$-subideal border basis
of the vanishing ideal of the point set
$\mathbb{X}=\{(1,1,1),\, (0,1,1),\, (1,1,0),\, (1,0,1)\}$.
Notice that the first point of~$\mathbb{X}$ lies on
$\mathcal{Z}(f_1,f_2)$, so that we should expect an $F$-order
ideal consisting of three $F$-terms. Let us follow the steps of
the algorithm. (We only list those steps in which something happens.)

\begin{enumerate}
\item[{\bf S2}] Let $d=1$ and $L=[y-z]$.

\item[{\bf S3}] Form $\mathcal{A}=(0,0,1,-1)^{\rm tr}$
and compute $\mathcal{B}=(0)$. (Thus $\mathcal{C}=\mathcal{B}$.)

\item[{\bf S6}] Let $\mathcal{O}_F=\{y-z\}$ and $\mathcal{M}=(0,0,1,-1)^{\rm tr}$.

\item[{\bf S2}] Let $d=2$ and $L=[ x^2-1,\, x(y-z),\, y(y-z),\, z(y-z)]$.

\item[{\bf S3}] Compute $\mathcal{A}=\begin{pmatrix}
0 & 0 & 0 & 0 & 0 \\ -1 & 0 & 0 & 0 & 0 \\ 0 & 1 & 1 & 0 & 1 \\ 0 & -1 &
0 & -1 & -1
\end{pmatrix}$ and
$\mathcal{B}= \begin{pmatrix} 0 & 1 & 0 & 0 & -1 \\ 0 & 0 & 1 & 1 & -1
\end{pmatrix}$. (Thus $\mathcal{C}=\mathcal{B}$.)

\item[{\bf S5}] The pivot indices $\nu(1)=2$ and $\nu(2)=3$ yield the set
$G=\{g_1,g_2\}$ with $g_1=x(y-z)-(y-z)$ and $g_2=y(y-z)+z(y-z)-(y-z)$.

\item[{\bf S6}] We obtain $\mathcal{O}_F=\{x^2-1,\, z(y-z),\, y-z\}$ and
$\mathcal{M}=\begin{pmatrix}
0 & 0 & 0 \\
\!\!\! -1 & 0 & 0 \\
0 & 0 & 1\\
0 & \!\!\! -1 & \!\!\! -1
\end{pmatrix}$.

\item[{\bf S2}] Let $d=3$. We have $L=[x(x^2-1),\,y(x^2-1),\, z(x^2-1),\,
xz(y-z),\,yz(y-z),\, z^2(y-z)]$.

\allowbreak
\item[{\bf S3}] Find $\mathcal{A}=\begin{pmatrix}
0 & 0 & 0 & 0         & 0 & 0            & 0 & 0 & 0 \\
0 & 0 & 0 & 0         & 0 & 0            & \!\!\! -1 & 0 & 0 \\
0 & 0 & 0 & 0         & 0 & 0            & 0 & 0 & 1 \\
0 & 0 & 0 & \!\!\! -1 & 0 & \!\!\! -1    & 0 & \!\!\! -1 & \!\!\! -1
\end{pmatrix}$
\vskip 3pt\allowbreak
and $\mathcal{B}=\begin{pmatrix}
1 & 0 & 0 & 0 & 0 & 0 & 0 & 0 & 0\\
0 & 1 & 0 & 0 & 0 & 0 & 0 & 0 & 0\\
0 & 0 & 1 & 0 & 0 & 0 & 0 & 0 & 0\\
0 & 0 & 0 & 1 & 0 & 0 & 0 & \!\!\! -1 & 0\\
0 & 0 & 0 & 0 & 1 & 0 & 0 & 0 & 0\\
0 & 0 & 0 & 0 & 0 & 1 & 0 & \!\!\! -1 & 0
\end{pmatrix}$. (Thus $\mathcal{C}=\mathcal{B}$.)
\vskip 3pt

\item[{\bf S5}] Here we obtain $G=\{g_1,\dots,g_8\}$ where
$g_3=x(x^2-1)$, $g_4=y(x^2-1)$, $g_5=z(x^2-1)$,
$g_6=xz(y-z)-z(y-z)$, $g_7=yz(y-z)$, and finally
$g_8=z^2(y-z)-z(y-z)$.

\item[{\bf S6}] There are no new non-pivot indices. Hence
$\mathcal{O}$ and $\mathcal{M}$ are not changed.

\item[{\bf S2}] We get $L=\emptyset$
and the algorithm stops.
\end{enumerate}

The result is the $F$-order ideal
$\mathcal{O}_F=\{x^2-1,\, z(y-z),\, y-z\}$
and the $\mathcal{O}_F$-subideal border basis
$G=\{g_1,\dots,g_8\}$ of~$I_{\mathbb{X}}$.
\end{example}

\bigskip

\section{The Subideal Version of the AVI-Algorithm}

From here on we work in the polynomial ring $P=\mathbb{R}[x_1,\dots,x_n]$
over the field of real numbers. We let $\mathbb{X}=\{p_1,\dots,p_s\}\subset [-1,1]^n
\subset \mathbb{R}^n$ be a finite set of points and $\epsilon> \tau >0$
two threshold numbers. (The number~$\epsilon$ can be thought of as a
measure for error tolerance of the input data points~$\mathbb{X}$
and~$\tau$ is used as a ``minimum size'' for acceptable leading coefficients
of unitary polynomials.)

Let us point out the following notational convention
we are using: the ``usual'' norm of a polynomial $f\in P$
is the Euclidean norm of its coefficient vector and is
denoted by~$\|f\|$. By ``unitary'' we mean $\|f\|=1$.
In contrast, by $\|f\|_1$ we mean the sum of the absolute
values of the coefficients of~$f$, and the term
``$\|\;\|_1$-unitary'' is to be interpreted accordingly.

Furthermore, by $\eval: P \longrightarrow \mathbb{R}^s$ we denote the evaluation map
$\eval(f)=(f(p_1),\dots,f(p_s))$ associated to~$\mathbb{X}$.
For the convenience of the reader, we briefly recall the basic structure
of the Approximate Vanishing Ideal Algorithm (AVI-algorithm) from~\cite{HKPP}.
Notice that we skip several technical details and explicit error estimates.
The goal of the AVI-algorithm is to compute an approximate border
basis, a notion that is defined as follows.

\begin{definition}
Let $\mathcal{O}=\{t_1,\dots,t_\mu\}\subseteq \mathbb{T}^n$ be an order
ideal of terms, let $\partial\mathcal{O}=\{b_1,\dots,b_\nu\}$ be its
border, and let $G=\{g_1,\dots,g_\nu\}$ be an $\mathcal{O}$-border
prebasis of the ideal $I=\langle g_1,\dots,g_\nu\rangle$ in~$P$.
Recall that this means that $g_j$ is of the form $g_j=b_j -
\sum_{i=1}^\mu c_{ij}t_i$ with $c_{ij}\in \mathbb{R}$.

For every pair $(i,j)$ such that $b_i,b_j$ are neighbors in $\partial\mathcal{O}$,
we compute the normal remainder $S'_{ij}= \NR_{\mathcal{O},G}(S_{ij})$
of the S-polynomial of~$g_i$ and~$g_j$ with respect to~$G$.
We say that~$G$ is an {\it $\epsilon$-approximate border basis}\/
of the ideal $I=\langle G\rangle$ if we have $\| S_{ij} \|< \epsilon$
for all such pairs $(i,j)$.
\end{definition}

Moreover, the AVI-algorithm uses the concepts of
approximate vanishing, approximate kernel and stabilized reduced
row echelon form, for which we refer to~\cite{HKPP}, Sect.~2 and~3.

\begin{algorithm}{\bf (AVI-Algorithm)}\label{AVI}\quad\\
Let $\mathbb{X}=\{p_1,\dots,p_s\}\subset [-1,1]^n \subset\mathbb{R}^n$
be a set of points as above, and let~$\sigma$ be a degree compatible
term ordering. Consider the following sequence of instructions.

\begin{enumerate}
\item[{\bf A1}] Start with lists $G=\emptyset$, $\mathcal{O}=[1]$,
a matrix $\mathcal{M}=(1,\dots,1)^{\rm tr}\in \Mat_{s,1}(\mathbb{R})$,
and $d=0$.

\item[{\bf A2}] Increase~$d$ by one and let~$L=[t_1,\dots,t_\ell]$
be the list of all terms of degree~$d$ in~$\partial\mathcal{O}$,
ordered decreasingly w.r.t.~$\sigma$.
If $L=\emptyset$, return the pair $(\mathcal{O},G)$ and stop.

\item[{\bf A3}] Form the matrix
$\mathcal{A}=(\eval(t_1),\dots,\eval(t_\ell),\mathcal{M})$
and calculate a matrix~$\mathcal{B}$
whose rows are an ONB of the approximate kernel $\apker(\mathcal{A},\epsilon)$
of~$\mathcal{A}$.

\item[{\bf A4}] Compute the stabilized reduced row echelon
form of~$\mathcal{B}$ with respect to the given~$\tau$.
The result is a matrix $\mathcal{C}=(c_{ij}) \in\Mat_{k,\ell+m}(\mathbb{R})$
such that $c_{ij}=0$ for $j<\nu(i)$. Here $\nu(i)$ denotes the
column index of the pivot element in the $i^{\rm th}$ row of~$\mathcal{C}$.

\item[{\bf A5}] For all $j\in\{1,\dots,\ell\}$ such that there exists
an $i\in\{1,\dots,k\}$ with $\nu(i)=j$, append the polynomial
$$
c_{ij}t_j + \sum_{j'=j+1}^\ell c_{ij'} t_{j'} + \sum_{j'=\ell+1}^{\ell+m} c_{ij'}u_{j'}
$$
to the list~$G$, where $u_{j'}$ is the $(j'-\ell)^{\rm th}$ element of~$\mathcal{O}$.

\item[{\bf A6}] For all $j=\ell,\ell-1,\dots,1$ such that the $j^{\rm th}$ column
of~$\mathcal{C}$ contains no pivot element, append the term~$t_j$
as a new first element to~$\mathcal{O}$ and append the column $\eval(t_j)$ as a new
first column to~$\mathcal{M}$.

\item[{\bf A7}] Calculate a matrix~$\mathcal{B}$ whose
rows are an ONB of $\apker(\mathcal{M},\epsilon)$.

\item[{\bf A8}] Repeat steps {\bf A4} -- {\bf A7}
until $\mathcal{B}$ is empty. Then continue with step~{\bf A2}.

\end{enumerate}

\noindent This is an algorithm which computes a pair $(\mathcal{O},G)$
such that the following properties hold for the bounds~$\delta$ and~$\eta$
given in~\cite{HKPP}, Thm.\ 3.3.

\begin{enumerate}
\item[(a)] The set~$G$ consists of unitary polynomials
which vanish $\delta$-approximately at the points of~$\mathbb{X}$.

\item[(b)] The set~$\mathcal{O}=\{t_1,\dots,t_\mu\}$
contains an order ideal of terms
such that there is no unitary polynomial in $\langle \mathcal{O}\rangle_K$
which vanishes $\epsilon$-approximately on~$\mathbb{X}$.

\item[(c)] The set $\widetilde{G}=\{(1/\LC_\sigma(g))\,g \mid g\in G\}$ is an
$\mathcal{O}$-border prebasis.

\item[(d)] The set~$\widetilde{G}$ is an $\eta$-approximate border basis.

\end{enumerate}
\end{algorithm}

Our main algorithm combines the techniques of this AVI-algorithm
with the subideal version of the BM-algorithm presented above
(see Alg.~\ref{SBMalg}). The result is an algorithm which
computes an approximate subideal border basis. This notion is defined
as follows.

\begin{definition}
Let $\mathcal{O}_F=\{t_1f_{\alpha_1},\dots,t_\mu f_{\alpha_\mu}\}$
be an $F$-order ideal, let $\partial\mathcal{O}_F=\{b_1 f_{\beta_1},
\dots,b_\nu f_{\beta_\nu}\}$ be its
border, and let $G=\{g_1,\dots,g_\nu\}$ be an $\mathcal{O}_F$-subideal
border prebasis.
Recall that this means that $g_j$ is of the form $g_j=b_j f_{\beta_j}-
\sum_{i=1}^\mu c_{ij}t_i f_{\alpha_i}$ with $c_{ij}\in \mathbb{R}$.

For every pair $(i,j)$ such that $b_i,b_j$ are neighbors in
$\partial\mathcal{O}_F$, i.e.\ such that $\beta_i=\beta_j$ and
$b_i,b_j$ are neighbors in the usual sense,
we compute the normal remainder $S'_{ij}= \NR_{\mathcal{O}_F,G}(S_{ij})$
of the S-polynomial of~$g_i$ and~$g_j$ with respect to~$G$.
We say that~$G$ is an {\it $\epsilon$-approximate $\mathcal{O}_F$-subideal
border basis}\/ if we have $\| S_{ij} \|< \epsilon$
for all such pairs $(i,j)$.
\end{definition}

Now we are ready to formulate and proof the main result of this section.

\begin{algorithm}{\bf (Subideal Version of the AVI-Algorithm)}\label{SAVI}
\quad\\
Let $\mathbb{X}=\{p_1,\dots,p_s\}\subset [-1,1]^n \subset\mathbb{R}^n$
be a set of points as above, let~$\sigma$ be a degree compatible
term ordering, and let $F=\{f_1,\dots,f_m\}\subset P\setminus \{0\}$
be a set of $\|\;\|_1$-unitary polynomials which generate an
ideal $J=\langle F\rangle$. Consider the following sequence of instructions.

\begin{enumerate}
\item[{\bf SA1}] Let $d=\min\{\deg(f_1),\dots,\deg(f_m)\}-1$,
$\mathcal{O}_F=\emptyset$, $G=\emptyset$,
and $\mathcal{M}\in\Mat_{s,0}(K)$.

\item[{\bf SA2}] Increase~$d$ by one. Let $L=[t_1 f_{\alpha_1},\dots,t_\ell
f_{\alpha_\ell}]$ be the list of all $F$-terms of degree~$d$
in~$F\cup\partial\mathcal{O}_F$, with their leading terms
ordered decreasingly w.r.t.~$\sigma$.
If then $L=\emptyset$ and $d\ge \max\{\deg(f_1),\dots,\deg(f_m)\}$,
return $(\mathcal{O}_F,G)$ and stop.

\item[{\bf SA3}] Form the matrix $\mathcal{A}=(\eval(t_1 f_{\alpha_1})\mid \cdots
\mid \eval(t_\ell f_{\alpha_\ell}) \mid \mathcal{M})$ and compute a matrix~$\mathcal{B}$
whose rows are an ONB of the approximate kernel of~$\mathcal{A}$.

\item[{\bf SA4}] Compute the stabilized reduced row echelon
form of~$\mathcal{B}$ with respect to the given~$\tau$.
The result is a matrix $\mathcal{C}=(c_{ij}) \in\Mat_{k,\ell+m}(\mathbb{R})$
such that $c_{ij}=0$ for $j<\nu(i)$. Here $\nu(i)$ denotes the
column index of the pivot element in the $i^{\rm th}$ row of~$\mathcal{C}$.

\item[{\bf SA5}] For all $j\in\{1,\dots,\ell\}$ such that there exists
an $i\in\{1,\dots,k\}$ with $\nu(i)=j$, append the polynomial
$$
t_j f_{\alpha_j}+ \sum_{j'=j+1}^\ell c_{ij'} t_{j'}f_{\alpha_{j'}} +
\sum_{j'=\ell+1}^{\ell+m} c_{ij'}u_{j'}
$$
to the list~$G$, where $u_{j'}$ is the $(j'-\ell)^{\rm th}$ element
of~$\mathcal{O}_F$.

\item[{\bf SA6}] For all $j=\ell,\ell-1,\dots,1$ such that the $j^{\rm th}$ column
of~$\mathcal{C}$ contains no pivot element, append the $F$-term~$t_j f_{\alpha_j}$
as a new first element to~$\mathcal{O}_F$, append the column $\eval(t_j f_{\alpha_j})$
as a new first column to~$\mathcal{M}$.

\item[{\bf SA7}] Calculate a matrix~$\mathcal{B}$ whose
rows are an ONB of $\apker(\mathcal{M},\epsilon)$.

\item[{\bf SA8}] Repeat steps {\bf SA4} -- {\bf SA7}
until $\mathcal{B}$ is empty. Then continue with step~{\bf A2}.

\end{enumerate}

\noindent This is an algorithm which computes a pair $(\mathcal{O}_F,G)$
with the following properties:

\begin{enumerate}
\item[(a)] The set~$G$ consists of unitary polynomials
which vanish $\delta$-approximately at the points of~$\mathbb{X}$.
Here we can use $\delta= \epsilon\sqrt{\nu}+\tau\nu(\mu+\nu)\sqrt{s}$.

\item[(b)] The set~$\mathcal{O}_F$ contains an $F$-order ideal
such that there is no unitary polynomial in $\langle \mathcal{O}_F\rangle_K$
which vanishes $\epsilon$-approximately on~$\mathbb{X}$.

\item[(c)] The set $\widetilde{G}=\{(1/\LC_\sigma(g))\,g \mid g\in G\}$ is an
$\mathcal{O}_F$-subideal border prebasis.

\item[(d)] The set~$\widetilde{G}$ is an $\eta$-approximate subideal
border basis for
$\eta=2\delta+2\nu\delta^2/\gamma\epsilon + 2\nu\delta\sqrt{s}/\epsilon$.
Here~$\gamma$ denotes the smallest absolute value of the border
$F$-term coefficient of one the polynomials~$g_i$.
\end{enumerate}
\end{algorithm}

\begin{proof}
Large parts of this proof correspond exactly to the
proof of the usual AVI-algorithm (see Thm.~3.2 in~\cite{HKPP}).
Therefore we will mainly point of the additional arguments
necessary to show the subideal version.
The finiteness proof is identical to the finiteness proof in the
subideal version of the BM-algorithm~\ref{SBMalg}.

For the proof of~(a), we can proceed exactly as in the
case of the usual AVI-algorithm. There is only one point where we have
to provide a further argument: the norm of the evaluation vector
of an $F$-term is~$\le \sqrt{s}$. To see this, we let $t_i f_j$ be an
$F$-term and we write $t_i f_j=\sum_k c_k \tilde t_k$ with $c_k \in
\mathbb{R}$ and $\tilde t_k \in\mathbb{T}^n$. Since~$f_j$ is $\|\;\|_1$-unitary
and $\mathbb{X}\in [-1,1]^n$, we have $\| \eval(t_i f_j) \| \le
\sum_k |c_k|\, \|\eval(\tilde t_k)\| \le \| f_j \|_1\,\sqrt{s}= \sqrt{s}$.

Next we show~(b). The columns of the final matrix~$\mathcal{M}$
are precisely the evaluation vectors of the $F$-terms in~$\mathcal{O}_F$.
After the loop in steps {\bf SA4} -- {\bf SA8}, we have $\apker(\mathcal{M})=\{0\}$.
Hence no unitary polynomial in $\langle \mathcal{O}_F\rangle_K$ has an
evaluation vector which is smaller than~$\epsilon$.
It remains to show that~$\mathcal{O}_F$ is an $F$-order ideal.
Suppose that $t_i f_j\in\mathcal{O}_F$ and that $x_k t_i f_j$ is put
into~$\mathcal{O}_F$. We have to prove that every $F$-term
$\tilde t \, f_j$ such that $x_\ell \tilde t f_j= x_k t_i f_j$
is also contained in~$\mathcal{O}_F$. In this case we have $t_i=x_\ell t'$
and we want to show $x_k t'f_j\in\mathcal{O}_F$. For a contradiction,
suppose that $x_k t'f_j$ is the border $F$-term of some $g\in G$.
Since the evaluation vector of~$x_\ell x_k t' f_j=x_k t_i f_j$
is not larger than $\eval(x_k t' f_j)$, also this $F$-term would be
detected by the loop of steps {\bf SA4} -- {\bf SA8} as
the border $F$-term of an element of~$G$. This contradicts
$x_k t_i f_j\in \mathcal{O}_F$.

To prove~(c), it suffices to note that steps~{\bf SA2} and~{\bf SA5}
make sure that the elements of~$G$ have the necessary form. Finally,
claim~(d) follows in exactly the same way as part~(d) of~\cite{HKPP},
Thm.~3.3.
\end{proof}

Let us follow the steps of this algorithm in a concrete case which is a slightly
perturbed version of Example~\ref{SBMexample}.

\begin{example}
In the ring $P=\mathbb{R}[x,y,z]$ we consider the ideal
$J=\langle f_1,f_2\rangle $ generated by the $\|\;\|_1$-unitary
polynomials $f_1=0.5\,y -0.5\, z$ and $f_2=0.5\, x^2-0.5$.
Let $\sigma={\tt DegRevLex}$, let $\epsilon=0.03$, and let
$\tau=0.001$. We want to compute an approximate
subideal border basis vanishing approximately at
the points of
$\mathbb{X}=\{(1,1,1),\, (0,1,1),\, (1,1,0),\, (1,0,0.98),\,
(0.98,0,1)$.

Notice that the first point of~$\mathbb{X}$ is contained in $\mathcal{Z}(f_1,f_2)$
and that the last two points of~$\mathbb{X}$ differ by $\le \epsilon$ from
one point $(1,0,1)$. Hence the approximate subideal border basis should correspond
to {\it three}\/ points outside $\mathcal{Z}(J)$, and therefore we should
expect to get an $F$-order ideal consisting of three $F$-terms.
We follow the steps of the subideal version of the AVI-algorithm~\ref{SAVI}.

\begin{enumerate}
\item[{\bf SA2}] Let $d=1$ and $L=[0.5\,y - 0.5\,z]$.

\item[{\bf SA3}] We compute $\mathcal{A}=(0,0,0.5,-0.49,-0.51)^{\rm tr}$ and
$\mathcal{B}=(0)$. (Thus $\mathcal{C}=\mathcal{B}$.)

\item[{\bf SA6}] Let $\mathcal{O}=\{f_1\}$ and $\mathcal{M}=
(0,0,0.5,-0.49,-0.5)^{\rm tr}$.

\item[{\bf SA2}] Let $d=2$ and $L=[f_2,\, xf_1,\, yf_1,\, zf_1]$.
\vskip 3pt

\item[{\bf SA3}] We compute $\mathcal{A}=\begin{pmatrix}
0 & 0 & 0 & 0 & 0 \\ -0.5 & 0 & 0 & 0 & 0 \\
0 & 0.5 & 0.5 & 0 & 0.5 \\ 0 & -0.49 & 0 & -0.4802 & -0.49 \\
-0.0198 & -0.49 & 0 & -0.5 & -0.5
\end{pmatrix}$ and
\vskip 3pt
$\mathcal{B}= \apker(\mathcal{A},\epsilon)=\begin{pmatrix}
0.0004 & 0.6755 & -0.5089 & -0.5068 & -0.1667 \\
0 & -0.3812 & -0.3735 & -0.3812 & 0.7548
\end{pmatrix}$.

\item[{\bf SA4}] The stabilized reduced row echelon form of~$\mathcal{B}$ is
\vskip 5pt
$\mathcal{C}= \begin{pmatrix}
0 & 0.3812 & 0.3735 & 0.3812 & -0.7548 \\
0 & 0 & 0.5754 & 0.5811 & -0.5754
\end{pmatrix}$.
\vskip 3pt

\item[{\bf SA5}] We get $G=\{g_1,g_2\}$ with $g_1= 0.3812\, xf_1 +
0.3735\, yf_1 + 0.3812\, zf_1 - 0.7548 f_1$
and $g_2= 0.5754\, yf_1 + 0.5811\, zf_1 - 0.5754 f_1$.

\item[{\bf SA6}] We find $\mathcal{O}=\{f_2,\, zf_1,\, f_1\}$
and $\mathcal{M}= \begin{pmatrix}
0 & -0.5 & 0 & 0 & 0 \\
0 & 0 & 0 & -0.4802 & -0.5\\
0 & 0 & 0.5 & -0.49 & -0.5
\end{pmatrix}^{\!\!\rm tr}$.

\item[{\bf SA2}] Now let $d=3$ and $L=[xf_2,\,yf_2,\,zf_2, xzf_1,\, yzf_1,\, z^2f_1]$.
\vskip 3pt

\item[{\bf SA3}] $\mathcal{A}= \begin{pmatrix}
0 & 0 & 0 & 0 & 0 & 0 & 0 & 0 & 0\\
0 & -0.5 & -0.5 & 0 & 0 & 0 & -0.5 & 0 & 0\\
0 & 0 & 0 & 0 & 0 & 0 & 0 & 0 & 0.5 \\
0 & 0 & 0 & -0.48 & 0 & -0.47 & 0 & -0.48 & -0.49\\
-0.02 & 0 & -0.02 & -0.49 & 0 & -0.5 & -0.02 & -0.5 & -0.5
\end{pmatrix}$
\vskip 3pt
\noindent and $\mathcal{B}, \mathcal{C}$ are matrices
of rank~6 which yield six further approximate subideal border basis
elements.

\item[{\bf SA5}] We obtain $G=\{g_1,\dots,g_8\}$ with
$g_3=xf_2-0.02\, zf_1$, $g_4=0.71\, yf_2 -0.71\, f_2 + 0.01\, zf_1$,
$g_5=0.71\, zf_2 -0.71\, f_2$, $g_6=0.71\, xzf_1- 0.7\,zf_1$,
$g_7=yzf_1$, and $g_8=0.71\, z^2f_1 -0.7\, zf_1$.

\item[{\bf SA5}] Since there is no new non-pivot row index, $\mathcal{O}_F$ and
$\mathcal{M}$ are not changed.

\item[{\bf SA2}] In degree $d=4$ we find $L=\emptyset$ and the algorithm stops.
\end{enumerate}

Hence the result is the $F$-order ideal $\mathcal{O}=\{x^2-1,\,z(y-z),\,y-z\}$
and the approximate $\mathcal{O}_F$-subideal border basis $G=\{g_1,\dots,g_8\}$.
This confirms that there are three approximate zeros of~$G$
outside the two lines $\mathcal{Z}(f_1,f_2)$.
\end{example}

\bigskip

\section{An Industrial Application}

In this section we apply the subideal version of the
AVI-algorithm to an actual industrial problem which has
been studied in the Algebraic Oil Research Project (see~\cite{AO}).
Viewed from a more general perspective, this
application shows how one can carry out the suggestion made in the
introduction, namely to use the subideal version of the AVI-algorithm
to introduce knowledge about the nature of a physical system
into the modeling process.

Suppose that a multi-zone well consists of two zones~$A$
and~$B$. During so-called {\it commingled production}, the two
zones are interacting and influence each other.
We have at our disposal time series of measured data such as pressures,
temperatures, total production and valve positions.
Moreover, during so-called {\it test phases} we can
obtain time series of these data when only one of the
two zones is producing. The following figure gives a
schematic representation of the physical system and the
measured variables.

\begin{figure}[ht]
\begin{centering}
\includegraphics[scale=0.5]{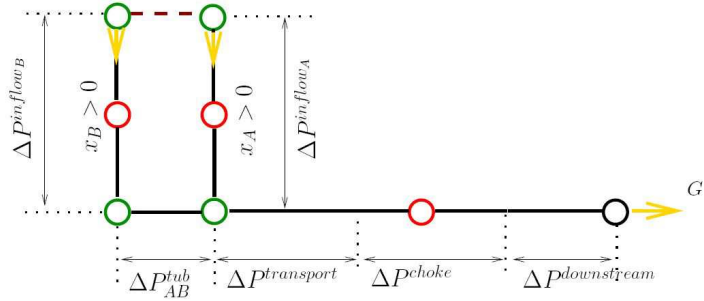}
\caption{Schematic representation of a two-zone well}
\end{centering}
\end{figure}

The measured total production does not equal the sum of the
individual productions calculated from the test data.
The {\it production allocation problem}\/ is to determine the contributions
of the two zones to the total production when they are producing
together. Here the {\it contributions} $c_A, c_B$ of the zones
are defined to be the part of the total production~$p_{AB}$ passing through
the corresponding down-hole valves. Therefore we have $p_{AB}=c_A+
c_B$, but there is no way of measuring~$c_A$ and~$c_B$ directly.
In this sense the production allocation problem is to determine
the contributions $c_A,c_B$ from the measured data.

Let the indeterminate~$x_A$ represent the valve position
of zone~$A$ and~$x_B$ the valve position of zone~$B$.
Here $x_i=0$ means that the valve is closed and
$x_i=1$ represents a fully opened valve position.
Clearly, if valve~$A$ is closed, i.e.\ for points in the
zero set $\mathcal{Z}(\langle x_A\rangle)$, there is no
contribution from zone~$A$, and likewise for~$B$.
By Hilbert's Nullstellensatz, this means that the polynomial
$p_A$ modeling the production of zone~$A$ should be computed
by using the subideal version of the AVI-algorithm with
$J=\langle x_A\rangle$. Similarly, we want to force
$p_B \in \langle x_B \rangle$.

Now we model the total production~$p_{AB}$ in the following
way. We write $p_{AB}=p_A+p_B+q_{AB}$ where~$q_{AB}$ is a polynomial
which measures the interaction of the two zones.
To compute~$q_{AB}$, we write it in the form
$$
q_{AB} = f_A \cdot (x_B\cdot p_A) + f_B \cdot (x_A\cdot p_B)
$$
Notice that such a decomposition can be computed via
the subideal version of the AVI-algorithm by applying it
to the ideal $J=\langle x_Bp_A,\, x_A p_B\rangle$.
The result will be a representation
$p_{AB}= p_A+ p_B + f_A x_B p_A + f_B x_A p_B$.
Here we observe that $x_A=0$ implies $p_{AB}= p_B$
because $p_A \in \langle x_A\rangle$. Analogously, we see that
$x_B=0$ implies $p_{AB}=p_A$, in accordance with the physical
situation.

The endresult of these computations is that the contributions
of the two zones during commingled production can be
computed from the equalities $c_A=(1+f_A x_B)p_A$ and
$c_B=(1+f_B x_A)p_B$. At the same time we gain a detailed
insight into the nature of the interactions by examining
the structure of the polynomials $f_A,f_B$.

\bigbreak

\subsection*{Acknowledgements}
The idea to construct a subideal version of the
AVI-algo\-rithm originated in discussions of the authors
with Daniel Heldt who also implemented a rough first
prototype. The algorithms of this paper have been implemented
by Jan Limbeck in the \apcocoa\ library (see~\cite{ApCoCoA})
and are freely available. The authors thank both of them for
the opportunity to use these implementations in the preparation
of this paper and in the Algebraic Oil Research Project
(see~\cite{AO}). Special thanks go to Lorenzo Robbiano
for useful discussions and to the Dipartimento die Matematica
of Universit\`a di Genova (Italy) for the hospitality the
authors enjoyed during part of the writing of this paper.


\end{document}